\documentclass[12pt,regno]{amsart}
\usepackage{amsfonts,amsmath,amssymb,amsbsy,amstext,amsthm}
\usepackage{color,enumerate,float,fullpage,verbatim}
\usepackage[shortlabels]{enumitem}
\usepackage{euler, amsfonts, amssymb, latexsym, epsfig,epic}

\usepackage{url}
\usepackage[colorlinks=true,hyperindex, linkcolor=Mahogany, pagebackref=false, citecolor=NavyBlue]{hyperref}
\usepackage[dvipsnames]{xcolor}
\usepackage{verbatim}
\usepackage{color}
\usepackage[all]{xy}  
\usepackage{mathrsfs}



\theoremstyle{definition}
\newtheorem{theorem}{Theorem}[section]
\newtheorem{theoremx}{Theorem}

\numberwithin{equation}{section}

\newtheorem*{theorem*}{Theorem}

\newtheorem{corollary}[theorem]{Corollary}
\newtheorem{lemma}[theorem]{Lemma}

\newtheorem*{claim*}{Claim}

\theoremstyle{definition}
\newtheorem{definition}[theorem]{Definition}

\newtheorem{remark}[theorem]{Remark}

\newtheoremstyle{TheoremNum}
        {8pt}{8pt}              
        {\upshape}                      
        {}                              
        {\bfseries}                     
        {.}                             
        {.5em}                             
        {\thmname{#1}\thmnote{ \bfseries #3}}
  \theoremstyle{TheoremNum}



\newcommand{\m}{\mathfrak{m}}


\newcommand{\ZZ}{\mathbb{Z}}

\newcommand{\FF}{\mathbb{F}}

\newcommand{\IN}{\operatorname{in}}

\newcommand{\reg}{\operatorname{reg}}
\newcommand{\pd}{\operatorname{pd}}

\newcommand{\rank}{\operatorname{rank}}

\newcommand{\Hom}{\operatorname{Hom}}

\newcommand{\Supp}{\operatorname{Supp}}

\newcommand{\lcm}{\operatorname{lcm}}

\newcommand{\Ass}{\operatorname{Ass}}

\newcommand{\depth}{\operatorname{depth}}





%

\newcommand{\p}{\mathfrak{p}}

\renewcommand{\leq}{\leqslant}
\renewcommand{\geq}{\geqslant}
\newcommand{\gin}{\operatorname{gin}}


\newcommand{\soc}{\operatorname{soc}}

\DeclareMathOperator{\Tor}{Tor}

\title[]{Uniform bounds on projective dimension and Castelnuovo-Mumford regularity}
\author{Giulio Caviglia}
\address{Department of Mathematics, Purdue University, 150 N. University Street, West Lafayette, IN 47907-2067, USA}
\email{gcavigli@purdue.edu}
\author{Alessandro De Stefani}
\address{Dipartimento di Matematica, Universit{\`a} di Genova, Via Dodecaneso 35, 16146 Genova, Italy}
\email{alessandro.destefani@unige.it}

\subjclass[2020]{Primary: 13D02, Secondary: 13P10, 13D07}
\keywords{Regularity, projective dimension, Betti numbers, initial ideals}

\begin{document}

\begin{abstract}
In this article we obtain uniform effective upper bounds for the projective dimension and the Castelnuovo-Mumford regularity of homogeneous ideals inside a standard graded polynomial ring $S$ over a field. Such bounds are independent of the number of variables of $S$, in the spirit of Stillman's conjecture and of the Ananyan-Hochster's theorem, and depend on partial data extracted from the beginning or the end of the resolution. The main result is an extension of a theorem due to McCullough from 2012. Namely, we bound the projective dimension and the regularity of an ideal in terms of the regularity of a fraction of the syzygies.
\end{abstract} 

\maketitle

\section{Introduction}
Let $k$ be a field, and $S=k[x_1,\ldots,x_N]$ be a polynomial ring over $k$ with the standard grading. Let $I \subseteq S$ be a homogeneous ideal generated by $\mu$ forms of degrees at most $D$. Stillman's conjecture, now a theorem due to Ananyan and Hochster \cite{AH}, predicts the existence of an upper bound for the projective dimension of $S/I$ which only depends on $\mu$ and $D$, and not on $N$. In fact, 
within this framework, $N$ should be thought of as an unknown number, possibly very large compared to the rest of the given data. 

The goal of this paper is to obtain upper bounds on numerical invariants in the spirit of the conjecture of Stillman: given some information on the ideal $I$, independent of $N$, we want to bound effectively the projective dimension and the Castelnuovo-Mumford regularity of $I$. If the input only consists of $\mu$ and $D$, then the best available result is indeed the one due to Ananyan and Hochster \cite{AH}. 
However, such a bound is not explicit except for small values of $\mu$ and $D$.


In our first estimate, we add in input to $\mu$ some information on the first non-vanishing local cohomology module of $S/I$ supported at $\m=(x_1,\ldots,x_N)$. For simplicity we here assume that $\depth(S/I)=0$, and we state our result in full generality in Section \ref{Section main}. 
\begin{theoremx} (see Theorem \ref{thm lc}) \label{THM A}
Let $I \subseteq S$ be a homogeneous ideal generated by $\mu$ forms. If $S/I$ has a non-zero socle element of degree $\alpha$, then $\pd(S/I) \leq \mu^{2^{\alpha}}$. 
\end{theoremx}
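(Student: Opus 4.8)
The plan is to reduce the statement to a bound on the number of variables and then induct on the socle degree $\alpha$. Since $S/I$ has a nonzero socle element, $\depth(S/I)=0$, so the Auslander--Buchsbaum formula gives $\pd(S/I)=N$; thus the theorem is equivalent to the assertion $N\leq \mu^{2^\alpha}$. Fix a form $F$ of degree $\alpha$ with $F\notin I$ and $\m F\subseteq I$, which is exactly the condition $\m\subseteq I:F\subsetneq S$, i.e. $I:F=\m$. I would prove $N\leq \mu^{2^\alpha}$ by induction on $\alpha$, the exponent $2^\alpha$ signalling that each time the socle degree drops by one the generator count should be allowed to square.

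For the base case $\alpha=0$ a socle element is a nonzero constant $c$ with $\m c\subseteq I$, forcing $\m\subseteq I\subsetneq S$, hence $I=\m$ and $N\leq \mu$. Before the inductive step I would make two harmless reductions. First, replacing $I$ by the subideal generated by those original generators of degree at most $\alpha+1$ changes neither $N$ nor the socle element $F$ (the elements $x_iF$ lie in degree $\alpha+1$, hence stay in the subideal), so I may assume $I$ is generated by at most $\mu$ forms of degree at most $\alpha+1$. Second, I may assume $I$ contains no linear form: modding out a linear generator leaves the quotient ring $S/I$ unchanged while dropping both $N$ and $\mu$ by one, and since $t\mapsto t^{2^\alpha}$ is convex and increasing the target inequality for the smaller instance implies it for the original (here one uses that not all generators are linear, as otherwise $S/I$ would have positive depth).

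For the inductive step the goal is to manufacture, inside the same ring $S$, an ideal $I^\ast$ generated by at most $\mu^2$ forms whose quotient again has depth zero but now carries a socle element of degree $\alpha-1$; the induction hypothesis would then yield $N=\pd(S/I^\ast)\leq (\mu^2)^{2^{\alpha-1}}=\mu^{2^\alpha}$. The natural candidate is a colon ideal $I^\ast=I:z$ for a suitably chosen (general) linear form $z$: writing $x_iF=\sum_j a_{ij}f_j$ and pairing this quadratic array of cofactors with the generators is where a bound of order $\mu^2$ on the new generators should come from, and the key point is that if $F\equiv zG\pmod{I}$ for a form $G$ of degree $\alpha-1$ then $G$ is a socle element of $S/(I:z)$, since $\m(zG)\subseteq \m F+\m I\subseteq I$ while $zG=F\notin I$.

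The main obstacle is precisely this descent together with the generator count. On the one hand, a fixed socle element $F$ need not be divisible by any linear form modulo $I$ (it may be ``irreducible''), so realising $F\equiv zG\pmod{I}$ requires either choosing $z$ generically after passing to the algebraic closure, or working with the whole finite-length module $H^0_\m(S/I)$ rather than the single element $F$; on the other hand, colon ideals are not generated by few elements in general, so the bound $\mu(I:z)\leq \mu^2$ cannot be soft and must be extracted from the explicit relations $x_iF=\sum_j a_{ij}f_j$ (equivalently, from bounding the number of generators of $(0:_{S/I}z)\subseteq H^0_\m(S/I)$). I expect controlling this generator count---the true source of the squaring, hence of the $2^\alpha$---to be the crux of the argument.
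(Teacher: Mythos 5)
Your reductions and base case are fine, and the arithmetic of the intended induction is consistent (if $I^\ast=I:z$ had a socle element of degree $\alpha-1$ and $\mu(I^\ast)\leq\mu^2$, then $N\leq(\mu^2)^{2^{\alpha-1}}=\mu^{2^\alpha}$ would follow), but the entire inductive step rests on two claims you do not prove, and the first is false as you state it. A fixed socle element $F$ of degree $\alpha$ need not satisfy $F\equiv zG\pmod I$ for \emph{any} linear form $z$, even over an algebraically closed field: take $S=k[x,y,z]$ with $\Char(k)\neq 2$, $I=(x^2)+\m^3$, and $F=y^2+2xz$. Then $F$ is a socle element of degree $2$, and $F\equiv zG \pmod I$ would force $zG=F-tx^2$ for some $t\in k$; but the quadratic form $F-tx^2$ has Gram determinant $-1$ for every $t$, hence rank $3$, while a product of two linear forms has rank at most $2$. (In this example other degree-$2$ socle elements, such as $xy$, are divisible; your fallback of working with all of $H^0_\m(S/I)$ is therefore not hopeless, but you never show a divisible socle element of degree $\leq\alpha$ exists. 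It always exists in the \emph{top} nonvanishing degree $d$ of $S/I$, since there $(S/I)_d=S_1\cdot(S/I)_{d-1}$, but inducting on $d$ yields an exponent equal to the top socle degree, which can exceed the given $\alpha$.) The second gap is the bound $\mu(I:z)\leq\mu^2$: the array $x_iF=\sum_j a_{ij}f_j$ merely rewrites the hypothesis $\m F\subseteq I$ and produces no generators of $I:z$, and no bound on $\mu(I:z)$ in terms of $\mu(I)$ alone is available --- controlling numerical data of colon ideals of $\mu$-generated ideals is of essentially the same order of difficulty as Stillman-type problems themselves. So what you flag as ``the crux'' is in fact the whole theorem, and it is missing.

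For comparison, the paper sidesteps both obstacles by degenerating instead of taking colons. Since $\soc(S/I)_\alpha\neq 0$ and $\depth(S/I)=0$, one has $\beta_{N,N+\alpha}(S/I)\neq 0$ with $N=\pd(S/I)$; by upper semicontinuity of graded Betti numbers, $\beta_{N,N+\alpha}(S/J)\neq 0$ for $J=\IN_{\preccurlyeq}(I)$, so $S/J$ has a \emph{monomial} socle element $u$ of degree $\alpha$. Truncating to $J'=J_{\leq\alpha+1}$ keeps $u$ in the socle (the products $x_iu$ live in degree $\alpha+1$, the same observation you make for your first reduction), so $\pd(S/J')=N$. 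Then Remark \ref{remark deg} bounds $\beta_0(J')\leq\mu^{2^\alpha}$, because $\alpha$ iterations of Buchberger's algorithm compute $\IN_{\preccurlyeq}(I)$ up to degree $\alpha+1$ and each iteration at most squares the number of polynomials --- this is exactly where your ``one squaring per unit of socle degree'' heuristic is realized, but at the level of Gr{\"o}bner computations rather than colon ideals --- and the Taylor complex (Remark \ref{rem taylor}) gives $N=\pd(S/J')\leq\beta_0(J')\leq\mu^{2^\alpha}$. Note that after passing to the monomial ideal $J$ your divisibility step becomes trivial ($u$ is divisible by a variable), so your scheme could in principle be restarted there; but you would still need the generator count, which is precisely what the Buchberger/truncation argument supplies and your colon construction does not.
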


Theorem \ref{THM A} is rather surprising to us. For instance, it implies that if $S/I$ has depth zero and $\reg(H^0_\m(S/I))$ is ``small'' compared to $\pd(S/I)$, then $I$ must be generated by ``many'' elements. We also note that no knowledge of the degrees of all the generators of $I$ is needed. 

As a consequence of this result, we obtain an upper bound on the projective dimension of unmixed radical ideals only in terms of $\alpha$ and the height (see Theorem \ref{thm prime}). 

In comparison with \cite{AH} we stress the fact that, while we add additional information on the socle of $S/I$, the actual estimate is not only explicit, but also comparable with well-known bounds of similar kind \cite{CS,Chardin,McP}. 

The first author showed that bounding the projective dimension of $S/I$ or its Castelnuovo-Mumford regularity only in terms of $\mu$ and $D$ are two equivalent problems \cite[Theorem 29.5]{PeevaGradedSyz}. Indeed, in Corollary \ref{coroll reg} we obtain a similar type of bound for the Castelnuovo-Mumford regularity. Regarding this invariant, we recall that McCullough and Peeva showed in \cite{McP} that not only the Eisenbud-Goto conjecture \cite{EG} is false, but that there is no polynomial upper bound to the regularity only in terms of the multiplicity. See also \cite{CCMcPV}.

Next, we obtain a double exponential upper bound on $\pd(S/I)$ by providing as input $\mu, D$, and an estimate on the generators of the second module of syzygies of linear sections of $S/I$. In this direction, we recall a question raised by Craig Huneke: ``Is there a reasonable upper bound for the projective dimension of $S/I$ just in terms of $\mu, D$ and the maximal degree of a minimal generator of the second module of syzygies of $S/I$?'' We cannot answer Huneke's question because in our assumptions we need to take some linear sections of $S/I$, but our result gives some progress towards that.

\begin{theoremx} (see Theorem \ref{thm syz}) \label{THM B}
Assume that $k$ is infinite, and let $I \subseteq S=k[x_1,\ldots,x_N]$ be an ideal generated by $\mu$ homogeneous elements of degree at most $D$. Assume that $\depth(S/I) = 0$. If, for a sufficiently general linear form $\ell$, one has $\reg\left(\Tor_2\left(\frac{S}{I+(\ell)},k\right)\right) \leq C$, then $\pd(S/I) \leq \mu^{2^{\max\{C,D\}-1}}$. 
\end{theoremx}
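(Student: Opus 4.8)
The plan is to deduce the statement from Theorem~\ref{THM A} by producing a nonzero socle element of $S/I$ whose degree is at most $\max\{C,D\}-1$; feeding $\alpha=\max\{C,D\}-1$ into Theorem~\ref{THM A} then gives $\pd(S/I)\le \mu^{2^{\max\{C,D\}-1}}$. Since $\depth(S/I)=0$ we have $\soc(S/I)\neq 0$. Fix a sufficiently general linear form $\ell$ and set $J=I+(\ell)$, $\overline S=S/(\ell)$, and $K=(0:_{S/I}\ell)$. For general $\ell$ the form $\ell$ is filter-regular on $S/I$, so $K\subseteq H^0_{\m}(S/I)$ has finite length, and since $\ell K=0$ one checks directly that $\soc(S/I)=\soc_{\overline S}(K)$. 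Thus it suffices to locate a socle element of the $\overline S$-module $K$ in degree $\le \max\{C,D\}-1$.

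To access the degrees of $K$ through the hypothesis on $\Tor_2(S/J,k)$, I would compare the first syzygies of $J$ with those of $I$. Writing $\phi\colon \bigl(\bigoplus_i S(-\deg f_i)\bigr)\oplus S(-1)\to J$ for the presentation sending the last basis vector to $\ell$, projection onto the $\ell$-coordinate yields an exact sequence
$$0\longrightarrow \operatorname{Syz}_1(I)\longrightarrow \ker\phi \stackrel{\pi}{\longrightarrow}(I:\ell)(-1)\longrightarrow 0,$$
because a relation $\sum a_if_i+a_\ell\ell=0$ forces $a_\ell\in(I:\ell)$, and conversely every element of $(I:\ell)$ occurs as such an $a_\ell$. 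The subtlety is that $\phi$ need not be minimal: some generators $f_i$ of $I$ may become redundant in $J$. Splitting off these redundancies gives a degree-preserving decomposition $\ker\phi\otimes k\cong \Tor_2(S/J,k)\oplus\bigl(\bigoplus_j k(-e_j)\bigr)$, where the $e_j\le D$ are the degrees of the redundant generators. Tensoring the displayed sequence with $k$ produces a surjection $\ker\phi\otimes k\twoheadrightarrow \bigl((I:\ell)\otimes k\bigr)(-1)$, and since the left-hand side is concentrated in degrees $\le\max\{C,D\}$ (the $\Tor_2$ summand in degrees $\le C$ by hypothesis, the redundant summands in degrees $\le D$), the ideal $(I:\ell)$, and hence $K=(I:\ell)/I$, is generated in degrees $\le \max\{C,D\}-1$.

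It remains to pass from a bound on the generators of $K$ to a bound on a socle degree of $K$. This is the heart of the matter, and the step I expect to be the main obstacle: a priori a socle element of a finite-length module can sit in much higher degree than its generators, so the generation bound alone is not enough, and here the full genericity of $\ell$ must be exploited to control $\soc_{\overline S}(K)$ itself—equivalently, to bound the top degree in which $H^0_{\m}(S/I)$ meets $(0:_{S/I}\ell)$—rather than merely its generators. I would address this by upgrading the argument of the previous paragraph from $\otimes k$ (generators) to the whole relevant strand of the comparison between $S/I$, $L=\ell\,(S/I)$ and $S/J$: the long exact sequences attached to $0\to L\to S/I\to S/J\to 0$ and $0\to K(-1)\to (S/I)(-1)\to L\to 0$ express the graded pieces of $K$ in terms of $\Tor_2(S/J,k)$ and $\Tor_1(S/I,k)$, whose degrees are bounded by $C$ and $D$ respectively. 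Tracking these sequences degree by degree should force $\soc_{\overline S}(K)$—and therefore $\soc(S/I)$—into degrees $\le\max\{C,D\}-1$. With such a socle element in hand, a direct application of Theorem~\ref{THM A} completes the proof.
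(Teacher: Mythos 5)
Your reduction to Theorem~\ref{THM A} hinges on the intermediate claim that $\soc(S/I)$ contains a nonzero element of degree at most $\gamma-1$, where $\gamma=\max\{C,D\}$, and that claim is false; the step you yourself flagged as the main obstacle is not merely hard but cannot be closed. Take $S=k[x,y]$, $I=(x^3,y^3)$, and $\ell$ a general linear form. Then $I+(\ell)=(\ell,x^3)$ (since $y^3\equiv c^3x^3$ modulo $\ell$), a complete intersection, so $\Tor_2^S(S/(I+(\ell)),k)\cong k(-4)$ and the smallest admissible value is $C=4$; with $D=3$ this gives $\gamma-1=3$. Yet $\soc(S/I)$ is the one-dimensional space spanned by $x^2y^2$, concentrated in degree $4$. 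Your first two steps are correct and consistent with this example: the identification $\soc(S/I)=\soc_{\overline{S}}(K)$ holds, and $K=(I:\ell)/I$ is indeed generated in degrees $\leq \gamma-1$ --- this is exactly the inequality $t_1(S/(I:\ell))\leq \gamma-1$ that the paper extracts from the $\Tor$ long exact sequence of $0 \to (S/(I:\ell))[-1] \to S/I \to S/(I+(\ell)) \to 0$. But the socle of the finite-length module $K$ sits strictly above its generating degrees, and no degree-by-degree tracking of the two long exact sequences can move it: the family $(x^D,y^D)$ shows the gap is unbounded, with socle degree $2D-2$ against $\gamma-1=D$. Consequently $\pd(S/I)$ cannot be routed through Theorem~\ref{THM A} with $\alpha=\gamma-1$; applying it with the true $\alpha$ would only give the weaker bound $\mu^{2^{\alpha}}$.

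The paper's proof of Theorem~\ref{thm syz} shares your opening move (the bound on $t_1(S/(I:\ell))$) but then avoids any socle-degree control on $S/I$ altogether. After a generic change of coordinates it passes to $J=\IN_{{\rm revlex}}(I)$, which is Borel-fixed, and uses the revlex identity $\IN_{{\rm revlex}}(I:x_N)=J:x_N$ together with equality of Hilbert functions to produce a monomial $u\in (J:x_N)\smallsetminus J$ of degree $j_0\leq\gamma-1$. It then truncates, setting $J'=J_{\leq\gamma}$: Borel-fixedness of $J'$ makes $x_N$ a filter regular element for $S/J'$, while $ux_N\in J'$, $u\notin J'$ shows $x_N$ is a zerodivisor, forcing $\depth(S/J')=0$ and hence $\pd(S/J')=N$. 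Depth zero is thus certified by a zerodivisor witness on a \emph{truncation of the initial ideal}, with no need to know where any socle lives; Remark~\ref{remark deg} bounds the number of generators of $J'$ by $\mu^{2^{\gamma-1}}$, and the Taylor resolution (Remark~\ref{rem taylor}) converts that count into $N\leq\mu^{2^{\gamma-1}}$. If you want to salvage your outline, the piece to import is precisely this transfer of the depth-zero property to $J_{\leq\gamma}$ via Borel-fixedness and filter regularity, in place of the false socle-degree bound.
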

Again, for simplicity in Theorem \ref{THM B} we assumed that $\depth(S/I) = 0$. Our result in Section \ref{Section main} is more general and does not have this requirement; however, one needs to control generators of second syzygies of more than one linear section of $S/I$. Moreover, by means of standard techniques, both Theorem \ref{THM A} and \ref{THM B} yield upper bounds for the Castelnuovo-Mumford regularity of $I$ (see Corollaries \ref{coroll reg} and \ref{corollary regB}).

Now let $t_i = \reg(\Tor_i^S(S/I,k))$ for $i \in \ZZ_{\geq 0}$. We recall a result of McCullough.
\begin{theorem} \cite[Theorem 4.7]{Jason} Let $I \subseteq S=k[x_1,\ldots,x_N]$ be a homogeneous ideal, and set $c=\lceil \frac{N}{2} \rceil$. Then $\reg(S/I) \leq \sum_{i=1}^c t_i + \frac{\prod_{i=1}^c  t_i}{(c-1)!}$.
\end{theorem}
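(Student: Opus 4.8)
The plan is to compute the regularity from local cohomology and to split its contributions according to whether they can be reached directly from the front of the resolution or only through duality. Write $a_i = a_i(S/I) = \topdeg \HH^i_\m(S/I)$ for the top nonzero degree of the $i$-th local cohomology module (and $a_i = -\infty$ when $\HH^i_\m(S/I)=0$), so that $\reg(S/I) = \max_i\,(a_i + i)$. Recall also that $t_i = \max\{\,j : \beta_{ij}(S/I)\neq 0\,\}$ is the maximal degree of an $i$-th syzygy, and that $\pd(S/I)\le N$ by the Hilbert Syzygy Theorem; since $c=\up{N/2}$ this gives $N - c \le c$. I would split the maximum defining $\reg(S/I)$ at the index $i = N-c$: for $i \ge N-c$ the contribution is controlled by the first $c$ syzygy degrees via a duality argument, while for $i < N-c$ (a range of at most $c$ indices, all $\le c$) it must be controlled by an inductive degree estimate.

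For the high range I would use graded local duality. Applying $\Hom_S(-,S)$ to the minimal free resolution $F_\bullet$ of $S/I$, with $F_j = \bigoplus_k S(-a_{jk})$ and $t_j = \max_k a_{jk}$, the module $\Ext^j_S(S/I,S)$ is a subquotient of $F_j^\vee = \bigoplus_k S(a_{jk})$ and is therefore generated in degrees $\ge -t_j$. Local duality identifies $\HH^{N-j}_\m(S/I)$ with the graded Matlis dual of $\Ext^{j}_S(S/I,S)(-N)$, so $a_{N-j} \le t_j - N$, whence $a_{N-j} + (N-j) \le t_j - j \le \sum_{i=1}^c t_i$ as soon as $j \le c$. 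This already bounds every contribution $a_i + i$ with $i \ge N-c$ by $\sum_{i=1}^c t_i$; the use of only the first half of the syzygy degrees $t_1,\dots,t_c$, with $c = \up{N/2}$, is exactly what makes this half of the index range accessible.

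The remaining and genuinely harder task is to bound $a_i$ for the low indices $\depth(S/I) \le i < N-c$, equivalently the top degrees of the (possibly intermediate) local cohomology modules that are invisible to the front of the resolution. Here I would argue inductively by passing to a generic hyperplane section: for a general linear form $\ell$ the short exact sequence $0 \to (S/I)(-1)\xrightarrow{\ell} S/I \to S/(I,\ell)\to 0$, together with the annihilator of $\ell$, relates the $a_i(S/I)$ to the $a_i(S/(I,\ell))$ and lowers dimension, while the mapping cone on multiplication by $\ell$ gives $t_i(S/(I,\ell)) \le \max\{t_i(S/I),\, t_{i-1}(S/I)+1\}$, so the first $c$ syzygy degrees of the section remain controlled by $t_1,\dots,t_c$. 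Iterating this reduction over the at most $c$ relevant indices, each step contributes a multiplicative factor governed by the next available syzygy degree $t_i$; solving the resulting recursion is the crux, and I expect the leading term to be the product $t_1 t_2 \cdots t_c$ while the combinatorial coefficients accumulated over the iteration produce the denominator $(c-1)!$, yielding exactly $\dfrac{\prod_{i=1}^c t_i}{(c-1)!}$, with all lower-order contributions absorbed into $\sum_{i=1}^c t_i$.

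I expect the main obstacle to be precisely this recursion: producing a clean, unconditional inequality that bounds each low local cohomology top degree in terms of $t_1,\dots,t_c$ and tracking the combinatorial coefficient sharply enough to land on $(c-1)!$ rather than a weaker denominator. A subadditivity estimate $t_{a+b}\le t_a+t_b$ would yield a far stronger linear bound, but no such inequality is available unconditionally, and the product-over-factorial shape is precisely the cost of relying only on a multiplicative degree recursion. As a guiding sanity check I would first run the argument when $S/I$ is Cohen--Macaulay: there $\Ext^p_S(S/I,S)$ is the canonical module, the dual complex is an honest minimal resolution, the intermediate local cohomology vanishes, and the back half of the Betti table is literally the reflected front half, so duality alone suffices and the extra term measures exactly the deviation recorded by the deficiency modules $\HH^i_\m(S/I)$ for $\depth(S/I) < i < \dim(S/I)$.
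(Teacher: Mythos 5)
Note first that the paper does not prove this statement at all: it is quoted verbatim from McCullough's paper [Jason, Theorem 4.7] as background, so there is no internal proof to compare against, and your proposal must stand on its own as a proof of McCullough's theorem. Its first half does: dualizing the minimal free resolution and applying graded local duality to get $a_{N-j} \leq t_j - N$, hence $a_{N-j} + (N-j) \leq t_j - j$ for $j \leq c$, is correct and standard, and since $N - c \leq c$ it reduces the problem to the low-index modules $H^i_\m(S/I)$ with $i < N-c$. But that remaining half is precisely the actual content of McCullough's theorem --- the term $\prod_{i=1}^c t_i/(c-1)!$ exists exactly to control these low (deficiency) modules, which are invisible to the front of the resolution --- and your proposal does not prove it. You set up a generic-hyperplane-section recursion and then write that ``solving the resulting recursion is the crux'' and that you ``expect'' the leading term to be the product over $(c-1)!$. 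Stating the expected shape of the answer is not an argument: nothing in the sketch produces either the multiplicative structure or the factorial denominator, and this is where the bulk of McCullough's work lies.

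Moreover, the recursion as set up has concrete defects beyond being unfinished. The mapping-cone inequality $t_i(S/(I,\ell)) \leq \max\{t_i(S/I),\, t_{i-1}(S/I)+1\}$ requires $\ell$ to be a nonzerodivisor on $S/I$; a general linear form is only filter regular, and it fails to be regular exactly when $H^0_\m(S/I) \neq 0$ --- that is, in the very situation you are trying to bound. The correction terms in the resulting long exact sequences involve $(0:_{S/I} \ell) = (I:\ell)/I$, whose top degree is among the unknown quantities, so the induction as stated is circular. In addition, each cut lowers $N$, hence changes $c = \lceil N/2 \rceil$, and the $+1$ shifts in the syzygy degrees accumulate over the up to $c$ iterations; none of this bookkeeping is tracked, yet it is exactly where a constant as sharp as $(c-1)!$ would be won or lost. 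Your Cohen--Macaulay sanity check is correct but only confirms the easy duality half, where the intermediate cohomology vanishes. To repair the proposal you would need an actual mechanism --- in the spirit of [Jason], explicit degree and multiplicity-type estimates for the low local cohomology extracted from the first $c$ steps of the resolution --- rather than the generic-section recursion, which as written cannot be closed.
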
 

While McCullough's theorem provides an upper bound on the regularity of $S/I$ in terms of the generating degrees of some syzygies, as Corollary \ref{corollary regB} does, it does not fit into the framework of uniform bounds as the rest of this paper. In fact, it requires knowledge of $c$, which is essentially equivalent to requiring knowledge of $N$.

Our extension of McCullough's result is twofold: first of all we pass from  the need to control $\lceil \frac{N}{2} \rceil$ syzygies of $S/I$ to only requiring $\lceil \frac{N}{r} \rceil$ of them for any positive integer $r$. Moreover, we only need to know the value of $r$, not of the ratio $\lceil \frac{N}{r} \rceil$ itself. Specifically, given $r \in \ZZ_{>0}$ we let
\[
\reg_{\frac{1}{r}}(S/I) = \sup\left\{t_i - i \ \bigg| \  0 \leq i \leq \left\lceil \frac{N}{r} \right\rceil\right\}.
\]
Our result is the following:
\begin{theoremx} (see Theorem \ref{thm Jason} and Corollary \ref{coroll Jason}) \label{THM C}
Let $I \subseteq S = k[x_1,\ldots,x_N]$ be a homogeneous ideal generated by $\mu \geq 2$ forms, and $r \in \ZZ_{>0}$. If $\reg_{\frac{1}{r}}(S/I) \leq \delta$, then 
\[
\pd(S/I) \leq r \cdot \mu^{2^{\delta}} \text{ and } \reg(S/I) \leq (\delta+1)^{2^{\left(r\mu^{2^{\delta}}-2\right)}}.
\]
\end{theoremx}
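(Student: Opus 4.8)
The plan is to reduce Theorem C to the two tools already available in the paper: Theorem A (the socle-degree bound on projective dimension, $\pd(S/I)\le \mu^{2^\alpha}$ when $S/I$ has a socle element in degree $\alpha$) and the known equivalence between bounding projective dimension and bounding regularity in terms of $\mu$ and $D$ (cited as \cite[Theorem 29.5]{PeevaGradedSyz}). The hypothesis controls $\max\{t_i-i \mid 1\le i\le \lceil N/r\rceil\}\le \delta$, which says the first $\lceil N/r\rceil$ strands of the resolution are, in the shifted sense, bounded by $\delta$; the goal is to convert this partial ``beginning of the resolution'' data into a socle degree, then feed it into Theorem A.

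First I would set $c=\lceil N/r\rceil$ and pass to a linear section. Since $t_i-i\le\delta$ for $i\le c$, truncating the resolution and cutting by a sequence of sufficiently general linear forms produces, after modding out by roughly $N-c$ general linear forms, an Artinian (depth-zero) quotient whose socle degree I can bound in terms of $\delta$. Concretely, the vanishing of $\Tor_i$ outside the expected degree range up to homological position $c$ forces the reduced module $S/(I+\mathfrak{m}$-generic-linear-forms$)$ to have a nonzero socle element of degree at most $\delta+1$: this is exactly the mechanism behind McCullough's original estimate, where half the syzygies control the regularity, and here the fraction $1/r$ controls an Artinian reduction of the same type. The key inequality to extract is $\alpha\le\delta+1$ for the socle of this linear section.

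Next I would invoke Theorem A on the linear section. Because the section is a quotient of $S/I$ by a regular sequence of general linear forms, its number of generators is still at most $\mu$ (a general linear change does not increase the minimal number of generators of the ideal, and modding out by a linear form does not either). Applying Theorem A with $\alpha\le\delta+1$ gives a bound of the form $\mu^{2^{\delta}}$ on the projective dimension of the section. The point of working with the $1/r$ fraction is that each linear section only accounts for a block of $\lceil N/r\rceil$ homological degrees; stacking $r$ such sections (or iterating the cut $r$ times) recovers the full projective dimension of $S/I$, producing the claimed factor of $r$ and hence $\pd(S/I)\le r\cdot\mu^{2^{\delta}}$.

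Finally, for the regularity statement I would feed $\pd(S/I)\le r\cdot\mu^{2^{\delta}}$ together with the generating-degree bound (the degrees are controlled by $\delta+1$) into the regularity-versus-projective-dimension comparison. The number of variables one may assume, after the linear reductions, is bounded by $\pd(S/I)$, so one is in a genuinely bounded polynomial ring and the double-exponential Bayer--Mumford / Galligo type estimate $\reg(I)\le (\delta+1)^{2^{(\,r\mu^{2^\delta}-2\,)}}$ follows by substituting $D\rightsquigarrow\delta+1$ and $N\rightsquigarrow r\mu^{2^\delta}$ into the classical bound. I expect the main obstacle to be the first step: making precise how the partial hypothesis $t_i-i\le\delta$ for $i\le\lceil N/r\rceil$ forces a socle element of controlled degree in the correct linear section, and in particular checking that a \emph{sufficiently general} sequence of linear forms simultaneously realizes the Artinian reduction, preserves the $\Tor$-vanishing in the relevant range, and keeps the generator count at $\mu$. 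The stacking argument that turns one section's bound into the factor $r$ will also require care to ensure the homological degrees genuinely partition rather than overlap.
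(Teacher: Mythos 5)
Your reduction to Theorem A does not work, and the gap is at exactly the step you flagged as the main obstacle. The input $\alpha$ of Theorem A is a socle degree of a depth-zero module, and for an Artinian reduction of $S/I$ by a maximal regular sequence of linear forms the socle degrees are read off the \emph{tail} of the resolution: they are the $j$ with $\beta_{p,p+j}(S/I) \ne 0$, where $p = \pd(S/I)$. Your hypothesis $t_i - i \le \delta$ only controls homological degrees $i \le \lceil N/r \rceil$, i.e.\ the \emph{beginning} of the resolution, and in the interesting case $p > \lceil N/r \rceil$ it says nothing about $\beta_{p,p+j}$. Indeed the paper itself recalls (via McCullough--Peeva \cite{McP}) that ideals with few generators in small degrees --- hence with small $t_i$ for small $i$ --- can have doubly exponential regularity, so the general linear section has socle elements in enormous degree and your key inequality ``$\alpha \le \delta+1$ for the socle of the section'' is false. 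Two further problems: even granting that inequality, Theorem A with $\alpha = \delta+1$ yields $\mu^{2^{\delta+1}} = \bigl(\mu^{2^{\delta}}\bigr)^2$, not $\mu^{2^{\delta}}$; and the ``stacking'' of $r$ sections has no mechanism behind it --- cutting by additional general linear forms in depth zero does not partition the resolution into $r$ blocks of homological degrees (the projective dimension of an Artinian reduction is simply the ambient number of variables), so the factor $r$ cannot be recovered this way.

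The paper's actual proof of Theorem \ref{thm Jason} avoids socles and linear sections entirely and works directly at the homological position $s = \lceil \pd(S/I)/r \rceil - 1$, which by definition of $\reg_{\frac{1}{r}}$ lies in the controlled range: the hypothesis forces $\beta_{s,j+s}(I) = 0$ for $j > \delta+1$. Upper semicontinuity passes these Betti numbers to $J = \IN_{\preccurlyeq}(I)$, and the crucial observation is that $\beta_{s,j+s}(J)$ for $j \le \delta+1$ depends only on the truncation $J_{\le \delta+1}$; since $\beta_s(I) \ne 0$, this gives $\pd(S/J_{\le \delta+1}) \ge s+1$. Remark \ref{remark deg} (counting $\delta$ iterations of Buchberger's algorithm) bounds the number of generators of $J_{\le \delta+1}$ by $\mu^{2^{\delta}}$, and the Taylor complex (Remark \ref{rem taylor}) then forces $s+1 \le \mu^{2^{\delta}}$; the factor $r$ appears purely arithmetically from $\pd(S/I) \le r(s+1)$, not from any geometric iteration. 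Your final step --- deducing the regularity bound by substituting $D \rightsquigarrow \delta+1$ (legitimate, since $t_1 \le \delta+1$) and the projective dimension bound into the classical double-exponential estimate --- does match how Corollary \ref{coroll Jason} follows, but the core projective dimension argument needs to be replaced as above.
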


\subsection*{Acknowledgments} We thank the anonymous referee for providing useful suggestions. The first named author was partially supported by a grant from the Simons Foundation (SFI-MPS-TSM-00013569, G.C.). The second named author was partially supported by the MIUR Excellence Department Project CUP D33C23001110001, PRIN 2022 Project 2022K48YYP, and by INdAM-GNSAGA.

\section{Preliminaries}
Throughout this article, $k$ is a field, and $S=k[x_1,\ldots,x_N]$ is a graded polynomial ring, with $\deg(x_i)=1$ for every $i=1,\ldots,N$. We will refer to this as the standard grading on $S$. We let $\m=(x_1,\ldots,x_N)$ be the ideal of $S$ generated by elements of positive degree. 
If $M = \bigoplus_{j\in \ZZ} M_j$ is a finitely generated $\ZZ$-graded $S$-module, we let $M_{\leq j}$ be the graded $S$-module generated by $\bigoplus_{p \leq j} M_p$. We let $\beta_{i,j}(M) = \dim_k(\Tor_i^S(M,k)_j)$ be the $(i,j)$-th graded Betti number of $M$ as an $S$-module, and $t_i(M) = \sup\{j \mid \beta_{i,j}(M) \ne 0\}$. We also let $\beta_{i,\leq j}(M) = \sum_{p \leq j}\beta_{i,p}(M)$ and $\beta_i(M) = \sum_{j \in \ZZ} \beta_{i,j}(M)$. The {\it socle of $M$} is $\soc(M) = 0:_M \m$. Recall that $\soc(M)$ is a finitely generated $k$-vector subspace of $H^0_\m(M)$, which is non-zero if and only if $\depth(M)=0$. The {\it projective dimension of $M$} is defined as
\[
\pd(M) = \sup\{i \in \ZZ_{\geq 0} \mid \beta_{i,j}(M) \ne 0 \text{ for some } j\in \ZZ\}
\]
and the {\it Castelnuovo-Mumford regularity} is
\[
\reg(M)  = \sup \{t_i(M)-i \mid i \in \ZZ_{\geq 0} \}.
\]



Now let $\preccurlyeq$ be a monomial order on $S$. Given a set of polynomials $f_1,\ldots,f_s \in S$ as input, which we may assume being monic, we will call an {\it iteration of Buchberger's algorithm} the following procedure:
\begin{enumerate}
\item Compute the $S$-polynomials $S_{ij}$ between $f_i$ and $f_j$ for all $1 \leq i < j \leq s$:
\[
S_{ij} = \frac{\IN(f_j)}{\gcd\left(\IN(f_i),\IN(f_j)\right)} f_i - \frac{\IN(f_i)}{\gcd\left(\IN(f_i),\IN(f_j)\right)} f_j.
\]
\item Perform a division algorithm to get a standard expression of each $S_{ij}$ in terms of $f_1,\ldots,f_s$. That is, write 
\[
S_{ij} = \sum_{t=1}^s g_tf_t + r_{ij},
\]
with $\IN(g_tf_t) \preccurlyeq \IN(S_{ij})$ for all $t$ and either $r_{ij}=0$ or $\Supp(r_{ij}) \cap (\IN(f_1),\ldots,\IN(f_s)) = \emptyset$.
\end{enumerate}
The output are the original polynomials $f_1,\ldots,f_s$, together with the remainders $r_{ij}$ that are not zero, rescaled so that they are themselves monic.

It is well-known that, given an ideal $I=(f_1,\ldots,f_s)$, one can obtain a Gr{\"o}bner basis of $I$ for the monomial order $\preccurlyeq$ after a finite number of iterations of this algorithm, starting from $f_1,\ldots,f_s$ and taking as next input the output of the previous iteration.

The following remark, even if rather straightforward, will be used several times in the rest of the article. For a more general statement, which holds for orders induced by weights and controls also the first syzygies of the initial ideal of $I$, see \cite[Proposition 2.4]{CDS_Prime}. 

\begin{remark} \label{remark deg}
Assume that $I$ and $f_1,\ldots,f_\mu$ are homogeneous, and let $\delta$ be a positive integer. In order to obtain a set of polynomials whose initial forms generate $\IN_{\preccurlyeq}(I)$ up to degree $\delta$, one only needs to perform $\delta-1$ iterations of Buchberger's algorithm. Indeed, any subsequent iteration of the algorithm will necessarily produce $S$-polynomials which, in degree $\leq \delta$, were already obtained at a previous step and thus reduce to zero in (2). Moreover, as starting data one can consider only those polynomials among $f_1,\ldots,f_\mu$ which have degree at most $\delta$. Note that, if the input are $\mu$ polynomials, then the output of one iteration are at most $\binom{\mu}{2}+\mu \leq \mu^2$ polynomials. It follows that, if $I$ can be generated by $\mu$ elements of degree at most $\delta$, then $\IN_{\preccurlyeq}(I)$ has at most $\mu^{2^{\delta-1}}$ minimal generators of degree at most $\delta$.
\end{remark} 

\begin{definition} An ideal $I \subseteq S=k[x_1,\ldots,x_N]$ is said to be {\it Borel-fixed} if it is invariant under the action which sends each $x_j$ to $g \cdot x_j = \sum_{i} g_{ij}x_i$ for any $g=(g_{ij}) \in{\rm GL}_n(k)$ upper triangular matrix.
\end{definition}

In characteristic zero, Borel-fixed monomial ideals coincide with strongly stable ideals (e.g., see \cite[Section 4.2.2]{HerzogHibi}). In positive characteristic, however, the class of Borel-fixed monomial ideals is strictly larger; for instance, $I=(x_1^p,x_2^p)$ in $S=\FF_p[x_1,x_2]$ is Borel-fixed but not strongly stable. If $\preccurlyeq$ is a monomial order, and $I \subseteq S$ is homogeneous, then its generic initial ideal $\gin_{\preccurlyeq}(I)$ is Borel-fixed \cite{Galligo2,BS2}. 

We now recall the Taylor resolution of a monomial ideal (see for instance \cite[17.11]{Eisenbud}). Let $u_1,\ldots,u_r$ be monomials in $S$. For $\Lambda \subseteq \{1,\ldots,r\}$ we let $u_\Lambda = \lcm(u_i \mid i \in \Lambda)$. If $a_\Lambda$ is the exponent vector of $u_\Lambda$, and $S[-a_\Lambda]$ denotes a free cyclic $\ZZ^N$-graded $S$-module with generator in multi-degree $a_\Lambda$, then for $0 \leq i \leq r$ we let 
\[
F_i = \bigoplus_{\substack{\Lambda \subseteq \{1,\ldots,r\} \\|\Lambda|=i}} S[-a_\Lambda].
\]
If $\{e_{\Lambda}\}_{\Lambda \subseteq \{1,\ldots,r\}}$ denotes a graded free basis of $\bigoplus_{i=0}^r F_i$, we define differentials $d_i:F_i \to F_{i-1}$ as
\[
d_i(e_{\Lambda}) = \sum_{j \in \Lambda} {\rm sign}(j,\Lambda) \frac{u_{\Lambda}}{u_{\Lambda \smallsetminus \{j\}}} e_{\Lambda \smallsetminus \{j\}},
\]
where ${\rm sign}(j,\Lambda)$ denotes $(-1)^{s+1}$ if $j$ is the $s$-th element of $\Lambda$ in the natural order. This construction provides a free resolution of $S/I$, where $I=(u_1,\ldots,u_r)$. 
\begin{remark} \label{rem taylor} Note that, if $J$ is any monomial ideal generated by at most $r$ elements, then $\beta_i(S/J) \leq \rank(F_i) = \binom{r}{i}$. In particular, if $\pd(S/J) \geq s$, then $\binom{r}{s} \ne 0$, and therefore $s \leq r$.
\end{remark}  
\begin{definition} Let $M$ be a finitely generated $\ZZ$-graded $S$-module. A homogeneous element $\ell \in S$ is called {\it filter regular for $M$} if $0:_M \ell = \{\eta \in M \mid \eta \ell = 0\}$ has finite length. A sequence of homogeneous elements $\ell_1,\ldots,\ell_r$ is called a {\it filter regular sequence for $M$} if $\ell_{i+1}$ is a filter regular element for $M/(\ell_1,\ldots,\ell_i)M$ for all $0 \leq i \leq r-1$.
\end{definition}
Equivalently, we have that $\ell$ is filter regular for $M$ if $\ell \notin \p$ for all $\p \in \left(\Ass_S(M) \smallsetminus \{\m\}\right)$. Clearly, any $M$-regular element is filter regular; moreover, we note that if $k$ is infinite then any sufficiently general linear form is filter regular for $M$. We note that, if $\ell$ is filter regular for $M$ but not $M$-regular, then necessarily $\depth(M)=0$.

\section{Uniform upper bounds}  \label{Section main}

Let $S=k[x_1,\ldots,x_N]$, and $M$ be a finitely generated $\ZZ$-graded $S$-module. We recall that $\depth(M) = \inf\{r \mid H^r_\m(M) \ne 0\}$, where $\m=(x_1,\ldots,x_N)$. If $\depth(M) = r$, we let $\alpha(M) = \min\{i+r \mid [\soc(H^r_\m(M))]_i \ne 0\}$. 

\begin{lemma} \label{lemma reduction} Let $M$ be a finitely generated $\ZZ$-graded $S$-module, and assume that $r=\depth(M)>0$. If $\ell \in S$ is a linear form which is $M$-regular, then $\alpha(M/\ell M) = \alpha(M)$.  
\end{lemma}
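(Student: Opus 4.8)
The plan is to relate the local cohomology of $M/\ell M$ to that of $M$ through the short exact sequence
\[
0 \to M(-1) \xrightarrow{\ \ell\ } M \to M/\ell M \to 0
\]
coming from multiplication by the linear form $\ell$, which is exact precisely because $\ell$ is $M$-regular. Applying the functor $H^\bullet_\m(-)$ produces a long exact sequence in local cohomology, and since local cohomology commutes with graded shifts we may identify $H^j_\m(M(-1))$ with $H^j_\m(M)(-1)$ throughout. The first observation is that, because $r=\depth(M)$, one has $H^j_\m(M)=0$ for all $j<r$; in particular $H^{r-1}_\m(M)=H^{r-1}_\m(M)(-1)=0$. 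Reading off the relevant portion of the long exact sequence, the connecting homomorphism then identifies
\[
H^{r-1}_\m(M/\ell M) \cong \ker\!\left(H^r_\m(M)(-1) \xrightarrow{\ \ell\ } H^r_\m(M)\right) = \left(0 :_{H^r_\m(M)} \ell\right)(-1).
\]
This module is nonzero (consistent with the standard fact that a regular element drops depth by one, so $\depth(M/\ell M)=r-1$), hence $\alpha(M/\ell M)$ is governed by its socle.

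Next I would compute the socle of $N := 0 :_{H^r_\m(M)} \ell$, regarded as a graded submodule of $H^r_\m(M)$. Since the socle of a submodule is its intersection with the socle of the ambient module, $\soc(N) = N \cap \soc(H^r_\m(M))$. The key point is the containment $\soc(H^r_\m(M)) \subseteq N$: any element annihilated by all of $\m$ is in particular annihilated by $\ell \in \m$. Therefore $\soc(N) = \soc(H^r_\m(M))$, and carrying the degree shift from the isomorphism above yields
\[
\soc\!\left(H^{r-1}_\m(M/\ell M)\right) \cong \soc\!\left(H^r_\m(M)\right)(-1).
\]

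Finally I would translate this identity of graded vector spaces into the claimed equality of invariants. Writing $i_0 = \min\{i \mid [\soc(H^r_\m(M))]_i \neq 0\}$, so that $\alpha(M) = i_0 + r$, the shift by $-1$ shows $\min\{j \mid [\soc(H^{r-1}_\m(M/\ell M))]_j \neq 0\} = i_0 + 1$, whence $\alpha(M/\ell M) = (i_0+1)+(r-1) = i_0+r = \alpha(M)$.

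I expect the argument to be essentially formal once the isomorphism from the long exact sequence is in place; the only point requiring genuine care is the bookkeeping in which the degree shift by $-1$ (from $M(-1)$) and the drop of the homological index from $r$ to $r-1$ in the definition of $\alpha$ conspire to cancel. The one substantive step, the socle identification $\soc(0:_{H^r_\m(M)}\ell) = \soc(H^r_\m(M))$, reduces to the observation that $\ell \in \m$, so I anticipate no serious obstacle.
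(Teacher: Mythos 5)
Your proof is correct and follows essentially the same route as the paper: the same short exact sequence, the same use of $H^{r-1}_\m(M)=0$ in the long exact sequence of local cohomology, and the same key observation that $\ell \in \m$ acts as zero on socles --- you package it as $\soc\left(0:_{H^r_\m(M)}\ell\right) = \soc\left(H^r_\m(M)\right)$ after identifying $H^{r-1}_\m(M/\ell M)$ with the kernel of multiplication by $\ell$, while the paper applies $\Hom_S(k,-)$ to the long exact sequence and notes the induced map on socles vanishes. Your degree bookkeeping reproduces exactly the paper's graded isomorphism $\soc\left(H^{r-1}_\m(M/\ell M)\right)_n \cong \soc\left(H^r_\m(M)\right)_{n-1}$, so there is nothing to correct.
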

\begin{proof}
Applying local cohomology to the short exact sequence 
\[
\xymatrix{
0 \ar[r] & \displaystyle M[-1] \ar[r]^-{\cdot \ell} & \displaystyle M \ar[r] & \displaystyle \frac{M}{\ell M} \ar[r] & 0
}
\]
gives a long exact sequence 
\[
\xymatrix{
\ldots \ar[r] &  \displaystyle H^{r-1}_\m\left(M\right) \ar[r] &  \displaystyle H^{r-1}_\m\left(\frac{M}{\ell M}\right) \ar[r] &  \displaystyle H^r_\m\left(M\right)[-1] \ar[r]^-{\cdot \ell} & \displaystyle H^r_\m\left(M\right) \ar[r] & \ldots
}
\]
First note that $H^{r-1}_\m(M)=0$. Applying the functor $\Hom_S(k,-)$ gives an exact sequence on socles:
\[
\xymatrix{
0 \ar[r] &  \displaystyle \soc\left(H^{r-1}_\m\left(\frac{M}{\ell M}\right)\right) \ar[r] &  \displaystyle \soc\left(H^r_\m\left(M\right)\right)[-1] \ar[r]^-{\cdot \ell} & \displaystyle \soc\left(H^r_\m\left(M\right)\right) \ar[r] & \ldots
}
\]
Since the multiplication by $\ell$ is the zero map on socles, we get graded isomorphisms
\[
\soc\left(H^{r-1}_\m\left(\frac{M}{\ell M}\right)\right)_n \cong \soc\left(H^r_\m\left(M\right)\right)_{n-1}
\]
for all $n \in \ZZ$. In particular, this yields the desired equality.
\end{proof}

\begin{remark} If we let $p=N-r$ be the projective dimension of $M$, then one can check that $\alpha(M) = \min\{j -p \mid \beta_{p,j}(M) \ne 0\}$. We note that, from this alternative description of $\alpha(M)$, one can get another proof of Lemma \ref{lemma reduction}.
\end{remark}

\begin{theorem} \label{thm lc}
Let $I \subseteq S$ be an ideal generated by $\mu$ homogeneous elements, and let $\alpha=\alpha(S/I)$. Then
\[
\pd(S/I) \leq \mu^{2^{\alpha}}.
\]
\end{theorem}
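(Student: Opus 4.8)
The plan is to replace $S/I$ by a Borel-fixed monomial quotient, read off its projective dimension from the combinatorics of the generators, and then bound the number of low-degree generators by Buchberger-type counting. First I would reduce to the case where $k$ is infinite (base change to $\overline{k}$ or $k(t)$ is faithfully flat and preserves $\mu$, $\alpha(S/I)$, and $\pd(S/I)$), and fix a generic $g \in {\rm GL}_N(k)$ so that $J := \IN_{\preccurlyeq}(gI) = \gin_{\preccurlyeq}(I)$ for the reverse lexicographic order $\preccurlyeq$. Then $J$ is Borel-fixed, $p := \pd(S/J) = \pd(S/I)$ (the reverse lex order preserves projective dimension), and since passing to $\gin$ only increases graded Betti numbers one has $\beta_{p,j}(S/I) \leq \beta_{p,j}(S/J)$ for all $j$; comparing the least such $j$ and using the description $\alpha(-) = \min\{j - p \mid \beta_{p,j}(-) \neq 0\}$ from the remark after Lemma \ref{lemma reduction} gives $\alpha(S/J) \leq \alpha(S/I) = \alpha$. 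It therefore suffices to bound $p$ in terms of $\mu$ and $\alpha' := \alpha(S/J) \leq \alpha$.

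Next I would extract from $J$ a single minimal generator of controlled degree sitting in the top homological slot. Let $p$ denote the largest index of a variable dividing a minimal generator of $J$. Since $x_{p+1}, \ldots, x_N$ divide no minimal generator, they form an $S/J$-regular sequence, and modding them out yields a Borel-fixed ideal $J' \subseteq S' = k[x_1, \ldots, x_p]$ with $J = J'S$. Because $J'$ is Borel-fixed, its associated primes have the form $(x_1, \ldots, x_i)$, so $x_p$ is filter regular for $S'/J'$; as $x_p$ divides a minimal generator it is also a zero divisor, whence $\depth(S'/J') = 0$ by the remark preceding Section \ref{Section main}, and thus $p = \pd(S/J)$ by the Auslander-Buchsbaum formula. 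Iterating Lemma \ref{lemma reduction} along $x_N, \ldots, x_{p+1}$ gives $\alpha(S'/J') = \alpha(S/J) = \alpha'$, and since $\depth(S'/J') = 0$ this is the least degree of a socle monomial $w$ of $S'/J'$. From $x_p w \in J'$ and $w \notin J'$ I would extract a minimal generator $u_0 = x_p v$ of $J'$ (hence of $J$) with $v \mid w$; then $\max(u_0) = p$ and $\deg(u_0) \leq \deg(w) + 1 = \alpha' + 1 \leq \alpha + 1$.

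Finally I would bound $p$ by counting. Let $I'$ be the ideal generated by those generators of $gI$ of degree at most $\alpha + 1$; there are at most $\mu$ of them, and for degree reasons $(gI)_d = I'_d$ for all $d \leq \alpha + 1$, so $J$ and $\IN_{\preccurlyeq}(I')$ share the same minimal generators in degrees $\leq \alpha + 1$. By Remark \ref{remark deg} applied to $I'$ with $\delta = \alpha + 1$, this common number is at most $\mu^{2^{\alpha}}$. Now set $\hat J := (J_{\leq \alpha+1})$: it is generated by the minimal generators of $J$ of degree $\leq \alpha+1$, so $|G(\hat J)| \leq \mu^{2^\alpha}$, it is Borel-fixed (each graded piece $J_d$ is stable under the Borel group), and it contains $u_0$ with $\max(u_0) = p$. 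Repeating the filter-regular argument of the previous paragraph for $\hat J$ shows $\pd(S/\hat J) = \max\{\max(u) \mid u \in G(\hat J)\} \geq p$, while the Taylor resolution (Remark \ref{rem taylor}) gives $\pd(S/\hat J) \leq |G(\hat J)| \leq \mu^{2^\alpha}$. Hence $\pd(S/I) = p \leq \mu^{2^\alpha}$.

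I expect the main obstacle to be the positive-characteristic bookkeeping: Eliahou-Kervaire is unavailable, so I cannot simply quote that $\pd(S/J)$ equals the largest variable index appearing in a generator, nor that $\alpha(S/J)$ is the least degree of a generator attaining it. Both facts are recovered through the socle/filter-regular mechanism above, and it is precisely the passage $x_p w \in J'$, costing one unit in degree, that produces the exponent $\alpha+1$ in Remark \ref{remark deg} and hence the stated bound $\mu^{2^\alpha}$ rather than the sharper $\mu^{2^{\alpha-1}}$. A secondary point requiring care is that generators of $I$ have unbounded degree, which is why the counting must be run on the truncation $I'$ together with the observation that $J$ is determined in low degrees by the low-degree generators alone.
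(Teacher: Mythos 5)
Your proof is correct, but it takes a genuinely different route from the paper's. The paper first reduces to $\depth(S/I)=0$ via Lemma \ref{lemma reduction}, so that $\pd(S/I)$ equals the number $N$ of variables of the (smaller) ambient ring; since $\pd(S/J)$ for $J=\IN_{\preccurlyeq}(I)$ can never exceed $N$, upper semicontinuity applied at the single top spot $\beta_{N,N+\alpha}$ transfers $\alpha$ to $J$ for an \emph{arbitrary} monomial order --- no genericity, no gin, no Borel theory. One then has a socle monomial $u$ of degree $\alpha$ in $S/J$, observes that $u$ remains a nonzero socle element modulo the truncation $J_{\leq \alpha+1}$ (each $x_iu$ has degree $\alpha+1$), and concludes with Remarks \ref{remark deg} and \ref{rem taylor} exactly as you do. You skip the depth reduction and pay for it with the Bayer--Stillman theorem: your parenthetical ``the reverse lex order preserves projective dimension'' is false for arbitrary coordinates and is precisely the statement $\depth(S/\gin_{{\rm revlex}}(I))=\depth(S/I)$ in generic coordinates, an external result the paper never needs; note that it is genuinely load-bearing in your setup, since semicontinuity alone would not rule out $\pd(S/J)>\pd(S/I)$, in which case your comparison of top Betti degrees yielding $\alpha(S/J)\leq\alpha(S/I)$ would break down. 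Granting that citation, the rest is sound, including in positive characteristic: associated primes of Borel-fixed ideals are initial segments of the variables (the characteristic-$p$ exchange property, \cite[Theorem 15.23]{Eisenbud}, plus a Lucas-type choice of exponent $p^e$), so your filter-regular/zerodivisor mechanism correctly recovers, characteristic-freely, that $\pd(S/J)$ equals the largest variable index appearing in a minimal generator, and the extraction of $u_0=x_pv$ with $v\mid w$ and $\deg u_0\leq \alpha+1$, surviving into the Borel-fixed truncation $\hat J$, is airtight. What your route buys is this structural byproduct (an Eliahou--Kervaire-type description valid in all characteristics), and, notably, it anticipates the paper's own proof of Theorem \ref{thm syz}, which likewise truncates a Borel-fixed revlex initial ideal and exploits filter regularity of the last variable; what the paper's route buys is brevity and economy of hypotheses, working with any monomial order and avoiding both gins and Bayer--Stillman entirely.
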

\begin{proof}
All the invariants involved are not affected by extending the base field; thus, we may assume that $k$ is infinite. If $r=\depth(S/I)>0$, we can then find a regular sequence $\ell_1,\ldots,\ell_r$ for $S/I$ consisting of linear forms. By Lemma \ref{lemma reduction} we may then assume that $\depth(S/I) = 0$, and seek an upper bound for $N = \pd(S/I)$. Our assumption guarantees that $\beta_{N,N+\alpha}(S/I) \ne 0$. Let $\preccurlyeq$ be a monomial order, and $J=\IN_{\preccurlyeq}(I)$. By upper semi-continuity we have that $\beta_{N,N+\alpha}(S/J) \ne 0$, that is, there exists a monomial $u \in \soc(S/J)$ of degree $\alpha$. If we let $J'=J_{\leq \alpha+1}$, then $u$ still represents a non-zero socle element of $S/J'$. In particular, $\depth(S/J')=0$, that is, $\pd(S/J')=N$. 
Since $I$ is generated by $\mu$ elements, by Remark \ref{remark deg} we have that $\beta_{0,\leq \alpha+1}(J) = \beta_{0,\leq \alpha+1}(J') \leq \mu^{2^\alpha}$. 
By Remark \ref{rem taylor} we conclude that $N \leq \mu^{2^{\alpha}}$, as claimed. 
\end{proof}

As already noted in Remark \ref{remark deg}, in place of $\mu$ we could actually use the minimal number of generators of $I$ of degree at most $\alpha+1$. As a consequence of this observation, and using \cite{CDS_Prime}, we obtain the following estimate for unmixed radical ideals.
\begin{theorem} \label{thm prime} Let $I \subseteq S$ be a homogeneous unmixed radical ideal of height $h$, 
and let $\alpha = \alpha(S/I)$. Then
\[
\pd(S/I) \leq \left(h^{2^{\alpha+3}-3} \right)^{2^{\alpha}}.
\]
\end{theorem}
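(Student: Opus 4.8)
The plan is to reduce Theorem \ref{thm prime} to Theorem \ref{thm lc} by bounding the minimal number of generators of $I$ in degree at most $\alpha+1$, rather than using the (a priori uncontrolled) total number of generators $\mu$. As noted in the remark following Theorem \ref{thm lc}, the estimate $\pd(S/I) \leq \mu^{2^{\alpha}}$ actually holds with $\mu$ replaced by $\beta_{0,\leq \alpha+1}(I)$, the number of minimal generators of $I$ of degree at most $\alpha+1$. So the whole task becomes: for an unmixed radical ideal of height $h$, bound $\beta_{0,\leq\alpha+1}(I)$ in terms of $h$ and $\alpha$ alone, independently of $N$.

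The key input should be the main result of \cite{CDS_Prime}, which is cited precisely for this purpose. I expect that paper to provide a uniform bound, independent of the number of variables, on the number (and degrees) of minimal generators of an unmixed radical ideal of fixed height, perhaps phrased as a bound on $\beta_{0,\leq d}(I)$ in terms of $h$ and the degree cutoff $d$. The exponent $2^{\alpha+3}-3$ appearing in the base $h^{2^{\alpha+3}-3}$ strongly suggests that the bound from \cite{CDS_Prime}, when specialized to the relevant degree $d = \alpha+1$, yields $\beta_{0,\leq\alpha+1}(I) \leq h^{2^{\alpha+3}-3}$. First I would extend the base field to assume $k$ infinite (all invariants are unaffected), and then, as in the proof of Theorem \ref{thm lc}, reduce to $\depth(S/I)=0$ by quotienting out a maximal regular sequence of general linear forms; one must check here that such sections preserve the hypotheses adequately or that the radical/unmixed structure interacts correctly with the cited bound, and that $\alpha$ is preserved via Lemma \ref{lemma reduction}.

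Once $\beta_{0,\leq\alpha+1}(I) \leq h^{2^{\alpha+3}-3}$ is in hand, I would simply substitute this quantity in place of $\mu$ into the conclusion of Theorem \ref{thm lc}, obtaining
\[
\pd(S/I) \leq \left(h^{2^{\alpha+3}-3}\right)^{2^{\alpha}},
\]
which is exactly the claimed inequality. The arithmetic here is a direct substitution and requires no further estimation.

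The main obstacle I anticipate is verifying that the bound from \cite{CDS_Prime} genuinely applies in the degree range needed and is compatible with the height-$h$, unmixed, radical hypotheses after the reduction to depth zero. In particular, passing to a linear section to achieve $\depth(S/I)=0$ may destroy primality or radicality of $I$ itself, so I would want to apply the generator bound from \cite{CDS_Prime} to the original ideal $I$ before reducing dimension, and only afterwards invoke Lemma \ref{lemma reduction} to transfer the $\alpha$-data. Reconciling the order of these two operations—bounding generators in low degree versus cutting down to depth zero—so that both the generator count and the socle degree $\alpha$ are simultaneously controlled is the delicate point; everything else is bookkeeping and the direct exponentiation above.
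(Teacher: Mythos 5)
Your proposal is correct and is essentially the paper's own proof: the paper likewise cites Theorem 4.2 and Proposition 4.7 of \cite{CDS_Prime} to get $\beta_{0,\leq \alpha+1}(I) \leq h^{2^{\alpha+3}-3}$ and then invokes Theorem \ref{thm lc} with $\mu$ replaced by the number of minimal generators of degree at most $\alpha+1$, a refinement the paper records just before the statement. The order-of-operations issue you flag is already absorbed into Theorem \ref{thm lc}, whose proof performs the depth-zero reduction internally (via Lemma \ref{lemma reduction}) while the generator count in degrees $\leq \alpha+1$ can only drop under linear sections, so no further argument is needed.
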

\begin{proof}
By Theorem 4.2 and Proposition 4.7 in \cite{CDS_Prime} we have that $\beta_{0,\leq \alpha+1}(I) \leq h^{2^{\alpha+3}-3}$. We conclude by Theorem \ref{thm lc}.
\end{proof}

Any bound on the projective dimension of an ideal, together with the knowledge of the number and degrees of its generators, allows to obtain a double exponential bound on the Castelnuovo-Mumford regularity, \cite[Theorem 29.5]{PeevaGradedSyz}.

\begin{corollary} \label{coroll reg}
Let $I \subseteq S$ be an ideal generated by $\mu \geq 2$ homogeneous elements of degree at least $2$ and at most $D$. If we let $\alpha = \alpha(S/I)$, then
\[
\reg(I) \leq D^{2^{\left(\mu^{2^{\alpha}}-2\right)}}.
\]
\end{corollary}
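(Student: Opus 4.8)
The plan is to chain together the projective dimension estimate we have just proved with the standard passage from a bound on projective dimension to a bound on regularity. Concretely, I would first apply Theorem \ref{thm lc} to $I$, which is generated by $\mu$ homogeneous elements, to obtain
\[
\pd(S/I) \leq \mu^{2^\alpha},
\]
where $\alpha = \alpha(S/I)$. This is the only place where the hypothesis on $\alpha$ enters.

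With this in hand, I would invoke \cite[Theorem 29.5]{PeevaGradedSyz}, which is exactly the result quoted in the sentence preceding the corollary: given an ideal generated in degrees between $2$ and $D$ by $\mu \geq 2$ forms, together with an upper bound $p$ on $\pd(S/I)$, it produces the double exponential regularity estimate $\reg(I) \leq D^{2^{p-2}}$. The standing hypotheses of the corollary, namely $\mu \geq 2$ and generating degrees in the range $[2,D]$, are precisely what that theorem requires, so setting $p = \mu^{2^\alpha}$ and substituting yields $\reg(I) \leq D^{2^{(\mu^{2^\alpha}-2)}}$.

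The only genuine point to check is that it is legitimate to feed an \emph{upper bound} for $\pd(S/I)$ into the estimate in place of the exact value. Since $D \geq 2$, the function $p \mapsto D^{2^{p-2}}$ is increasing, so replacing $\pd(S/I)$ by the possibly larger quantity $\mu^{2^\alpha}$ can only enlarge the right-hand side, and the inequality is preserved. One should also confirm that the exponent stays in the regime where the cited theorem applies: because the generators have degree at least $2$, the quotient $S/I$ has no socle element in degree $0$, so $\alpha \geq 1$, whence $\mu^{2^\alpha} \geq \mu^2 \geq 4$ and the exponent $\mu^{2^\alpha}-2$ is positive. I do not expect any real obstacle beyond aligning the parameters and checking this monotonicity: once Theorem \ref{thm lc} is granted, the corollary is a direct substitution into \cite[Theorem 29.5]{PeevaGradedSyz}.
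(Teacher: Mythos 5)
Your outline coincides with the paper's proof: apply Theorem \ref{thm lc} to get $\pd(S/I) \leq \mu^{2^{\alpha}}$, then convert this projective dimension bound into a regularity bound, using monotonicity in the exponent and the check that $\alpha \geq 1$ (your observation that $\mu^{2^{\alpha}} \geq 4$ is exactly the paper's remark that the assumptions guarantee $N \geq 3$). The one genuine issue is the black box you invoke for the conversion. \cite[Theorem 29.5]{PeevaGradedSyz} is the \emph{qualitative} equivalence between Stillman-type bounds for projective dimension and for regularity --- the paper cites it only in that spirit --- and it does not, as stated, output $\reg(I) \leq D^{2^{p-2}}$ from an upper bound $p$ on $\pd(S/I)$. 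In particular, the sharp base $D$ (rather than $2D$) is not available from the classical effective ingredients behind that equivalence: a Caviglia--Sbarra-type bound has the form $(2D)^{2^{N-2}}$, and plugging it into your argument would only give $\reg(I) \leq (2D)^{2^{(\mu^{2^{\alpha}}-2)}}$, weaker than the claimed inequality.

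What the paper actually does to close this is a two-step replacement for your citation. First, after possibly enlarging $k$, it goes modulo a maximal $S/I$-regular sequence of linear forms: this preserves all graded Betti numbers (hence $\mu$, the degree range $[2,D]$, and $\reg$), preserves $\alpha$ by Lemma \ref{lemma reduction}, and arranges $\depth(S/I)=0$, so that by Auslander--Buchsbaum the number of variables $N$ equals $\pd(S/I) \leq \mu^{2^{\alpha}}$. Second, it applies \cite[Corollary 2.13]{CDS_Lin}, which for $N \geq 3$ variables and an ideal generated in degrees between $2$ and $D$ gives $\reg(I) \leq D^{2^{N-2}}$; the hypothesis that the generators have degree at least $2$ is precisely what buys the base $D$ here. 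Since $D^{2^{N-2}}$ is increasing in $N$, the bound with $N$ replaced by $\mu^{2^{\alpha}}$ follows --- this is your monotonicity step, which is fine. So your proof is right in structure, but to be complete you must make the depth-zero reduction explicit and replace the appeal to \cite[Theorem 29.5]{PeevaGradedSyz} by the sharp $N$-variable regularity bound of \cite[Corollary 2.13]{CDS_Lin}.
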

\begin{proof}
After possibly enlarging the base field, we can go modulo a maximal regular sequence consisting of linear forms, and we may assume that $\depth(S/I)=0$. By Theorem \ref{thm lc}, we may assume that $N = \pd(S/I) = \mu^{2^\alpha}$. Our assumptions guarantee that $N \geq 3$, and the claimed inequality now follows from \cite[Corollary 2.13]{CDS_Lin}.
\end{proof}

Theorem \ref{thm lc} allows us to recover the other implication of \cite[Theorem 29.5]{PeevaGradedSyz} as well: a bound on the Castelnuovo-Mumford regularity in terms of the number and the degrees of generators of an ideal provides one for its projective dimension, as proved by the first named author.

\begin{corollary} Let $I \subseteq S$ be a homogeneous ideal generated by $\mu$ elements. Then 
\[
\pd(S/I) \leq \mu^{2^{\reg(S/I)}}.
\]
\end{corollary}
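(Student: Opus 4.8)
The plan is to reduce to Theorem~\ref{thm lc} by comparing the invariant $\alpha(S/I)$ with the Castelnuovo--Mumford regularity. Since that theorem already gives $\pd(S/I) \leq \mu^{2^{\alpha(S/I)}}$, and the function $x \mapsto \mu^{2^x}$ is non-decreasing once $\mu \geq 1$ (the cases $\mu \in \{0,1\}$ being immediate), it suffices to prove the single inequality $\alpha(S/I) \leq \reg(S/I)$.

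To establish this I would set $r = \depth(S/I)$, so that $H^r_\m(S/I)$ is the first non-vanishing local cohomology module and, by definition, $\alpha(S/I) = \min\{i+r \mid [\soc(H^r_\m(S/I))]_i \neq 0\}$. The key observation is that $H^r_\m(S/I)$ is a nonzero graded Artinian module, hence bounded above in degree; if $a_r$ denotes its top nonzero degree, then every element of $[H^r_\m(S/I)]_{a_r}$ is killed by $\m$ (each variable pushes it into the vanishing degree $a_r+1$), so $[\soc(H^r_\m(S/I))]_{a_r} \neq 0$. Taking the minimum over socle degrees yields $\alpha(S/I) \leq a_r + r$.

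I would then invoke the standard cohomological description of regularity, namely $\reg(M) = \max_i\{a_i(M)+i\}$ where $a_i(M) = \max\{n \mid [H^i_\m(M)]_n \neq 0\}$, which gives $\reg(S/I) \geq a_r + r$. Chaining the two inequalities produces $\alpha(S/I) \leq \reg(S/I)$, and feeding this into Theorem~\ref{thm lc} closes the argument.

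There is no real obstacle here; the only points requiring a little care are the harmless reduction to $\mu \geq 1$ and the monotonicity in the exponent. I also note a cleaner route that sidesteps local cohomology entirely, using the Betti-number description of $\alpha$ from the Remark after Lemma~\ref{lemma reduction}: writing $p = \pd(S/I) = N - r$, one has $\alpha(S/I) = \min\{j-p \mid \beta_{p,j}(S/I) \neq 0\}$, while $\reg(S/I) \geq t_p(S/I) - p = \max\{j-p \mid \beta_{p,j}(S/I) \neq 0\}$; the inequality $\alpha(S/I) \leq \reg(S/I)$ is then immediate, since a minimum cannot exceed the corresponding maximum.
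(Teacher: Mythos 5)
Your proof is correct and follows essentially the same route as the paper: the paper's own argument is exactly the chain $\alpha(S/I) \leq \max\{i+r \mid [H^r_\m(S/I)]_i \ne 0\} \leq \max\{i+j \mid [H^j_\m(S/I)]_i \ne 0\} = \reg(S/I)$, fed into Theorem~\ref{thm lc}, and your socle-in-top-degree observation just makes the first inequality explicit. Your alternative Betti-number argument via $\alpha(S/I) = \min\{j-p \mid \beta_{p,j}(S/I) \ne 0\} \leq t_p(S/I)-p \leq \reg(S/I)$ is also valid and is implicit in the remark following Lemma~\ref{lemma reduction}.
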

\begin{proof}
Let $r=\depth(S/I)$. It suffices to observe that
\[
\alpha(S/I) \leq \max\{i+r \mid [H^r_\m(S/I)]_i \ne 0\} \leq \max\{i+j \mid [H^j_\m(S/I)]_i \ne 0 \} = \reg(S/I),
\]
and combine this with the first bound in Theorem \ref{thm lc}.
\end{proof}

We now turn our attention to our second main result. In addition to information on the minimal number of generators of an ideal $I$ and their degrees, we require knowledge of the degrees of the generators of the first syzygy modules of sufficiently general hyperplane sections. This allows once again to obtain a double exponential bound for the projective dimension of $I$ in terms of the given data.

\begin{theorem} \label{thm syz}
Assume that $k$ is infinite. Let $I \subseteq S$ be a homogeneous ideal generated by $\mu$ forms of degree at most $D$. Let $r=\depth(S/I)$, and assume that $t_2(S/(I+(\ell_1,\ldots,\ell_{r+1}))) \leq C$ for a sufficiently general choice of linear forms $\ell_1,\ldots,\ell_{r+1}$. If we let $\gamma = \max\{C,D\}$, then
\[
\pd(S/I) \leq \mu^{2^{\gamma-1}}.
\]
\end{theorem}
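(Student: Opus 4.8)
The plan is to reduce to the case $\depth(S/I)=0$ and then to bound the number of variables that genuinely occur, by showing that the full projective dimension is already witnessed in a bounded degree of a generic initial ideal.

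First I would dispose of the depth. Since all the invariants involved are insensitive to enlarging $k$, and $k$ is infinite by hypothesis, choose a general linear regular sequence $\ell_1,\dots,\ell_r$ on $S/I$ and, after a general linear change of coordinates, identify it with $x_{N-r+1},\dots,x_N$. Working over $\bar S=k[x_1,\dots,x_{N-r}]$ we then have $M:=\bar S/\bar I$ with $\bar I$ generated by at most $\mu$ forms of degree $\le D$, $\depth_{\bar S}(M)=0$, and $\pd_{\bar S}(M)=N-r=\pd_S(S/I)$, the last equality because quotienting by a regular sequence of linear nonzerodivisors preserves the shape of the minimal free resolution (this is the role played by Lemma \ref{lemma reduction} in the proof of Theorem \ref{thm lc}). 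The hypothesis becomes: for a general linear form $\ell$ one has $t_2(\widetilde M)\le C$ over $\bar S$, where $\widetilde M:=M/\ell M=\bar S/(\bar I+(\ell))$. Writing $n:=\dim \bar S$, the goal is now $n\le \mu^{2^{\gamma-1}}$.

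Next I would move to a monomial setting. After a further general coordinate change take $\ell=x_n$ and pass to the reverse lexicographic generic initial ideal $J:=\gin(\bar I)$, which is Borel-fixed and satisfies $\pd(\bar S/J)=\pd(M)=n$; in particular $J$ has a minimal generator divisible by $x_n$, equivalently $\bar S/J$ has a nonzero socle element. Because we use the reverse lexicographic order with $x_n$ last in generic coordinates, taking the section commutes with the generic initial ideal, i.e. $\bar J:=J|_{x_n=0}=\gin(\widetilde I)$ in $\widetilde S:=k[x_1,\dots,x_{n-1}]$, where $\widetilde I$ is the image of $\bar I$. Paralleling Theorem \ref{thm lc}, the strategy is to exhibit a witness to $\pd(\bar S/J)=n$ living in degree $\le\gamma$: concretely, a minimal generator of $J$ that is divisible by $x_n$ and has degree $\le\gamma$ (equivalently a socle element of $\bar S/J$ in degree $\le\gamma-1$). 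Granting this, the truncation $J_{\le\gamma}$ is again Borel-fixed and retains such a generator, so $\depth(\bar S/J_{\le\gamma})=0$ and hence $\pd(\bar S/J_{\le\gamma})=n$.

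The heart of the argument, and the step I expect to be the main obstacle, is the degree bound on this witness. Via the Eliahou--Kervaire / Borel structure, a minimal generator $x_n w$ of $J$ forces $w$ to be a socle element of the section $\bar S/(J,x_n)=\widetilde S/\bar J$, so the $x_n$-divisible generators of $J$ are governed by the generators (degree $\le D$) and, crucially, by the first syzygies of the section. The delicate point is that I only control the first syzygies of $\widetilde I$ itself, namely $t_2(\widetilde M)\le C$, whereas passing to $\bar J=\gin(\widetilde I)$ can only increase $t_2$ by upper semicontinuity; hence one cannot merely read the bound off $\bar J$. The resolution I would pursue is to run the degree argument at the level of $\widetilde I$ rather than of its generic initial ideal: a minimal first syzygy of $\widetilde I$ of degree $\le C$, together with the generators of degree $\le D$, is exactly the data that Buchberger's algorithm converts into the low-degree part of the initial ideal, so that the least-degree $x_n$-divisible generator of $J$ appears in degree at most $\gamma=\max\{C,D\}$. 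Making this correspondence precise, that is, matching the homological witness of full projective dimension to a genuine first syzygy of the section rather than to a higher syzygy, is the technical core and is where a separate lemma on Borel-fixed ideals and general hyperplane sections is needed.

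Finally I would count and invoke the Taylor resolution. Since $\bar I$ is generated by $\mu$ forms of degree $\le D\le\gamma$, Remark \ref{remark deg} (that is, $\gamma-1$ iterations of Buchberger's algorithm) bounds the number of minimal generators of $J$ in degree $\le\gamma$ by $\mu^{2^{\gamma-1}}$, so $\beta_0(J_{\le\gamma})\le \mu^{2^{\gamma-1}}$. As $J_{\le\gamma}$ is a monomial ideal with $\pd(\bar S/J_{\le\gamma})=n$, Remark \ref{rem taylor} yields $n\le\beta_0(J_{\le\gamma})\le\mu^{2^{\gamma-1}}$. Undoing the reduction, $\pd(S/I)=n\le\mu^{2^{\gamma-1}}$, as claimed.
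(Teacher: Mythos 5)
Your outer architecture agrees with the paper's: reduce to $\depth(S/I)=0$, pass to the Borel-fixed revlex initial ideal $J$ in generic coordinates with $\ell_{r+1}=x_N$, truncate at degree $\gamma$, use Borel-fixedness to make $x_N$ filter regular for $S/J_{\leq\gamma}$ so that a single zerodivisor witness forces $\depth(S/J_{\leq\gamma})=0$ and $\pd(S/J_{\leq\gamma})=N$, and finish by Remark \ref{remark deg} and Remark \ref{rem taylor}. But the step you yourself flag as ``the technical core'' --- converting the hypothesis $t_2(S/(I+(\ell)))\leq C$ into a witness in degree at most $\gamma$ --- is genuinely missing, and the route you sketch (matching a first syzygy of the section to the output of Buchberger's algorithm) is not how the bound arises: the Buchberger iterations are already fully spent in Remark \ref{remark deg}, where they only \emph{count} the low-degree generators of $J$; they do not produce an $x_N$-divisible one, and the syzygies of the section never enter the division steps. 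The paper's mechanism, which you did not identify, is a colon-ideal computation at the level of $I$ itself: the graded short exact sequence
\[
0 \longrightarrow \frac{S}{I:x_N}[-1] \longrightarrow \frac{S}{I} \longrightarrow \frac{S}{I+(x_N)} \longrightarrow 0
\]
gives, via the long exact sequence of $\Tor$, the inequality $t_1(S/(I:x_N))+1 \leq \max\{t_2(S/(I+(x_N))), t_1(S/I)\} \leq \gamma$. Since $\depth(S/I)=0$ forces $I:x_N \neq I$, the least degree $j_0$ with $[(I:x_N)/I]_{j_0}\neq 0$ carries a minimal generator of $I:x_N$, so $j_0 \leq \gamma-1$; then the revlex identity $\IN(I:x_N)=J:x_N$ in generic coordinates, together with equality of Hilbert functions, yields a monomial $u \in (J:x_N)\smallsetminus J$ of degree $j_0$, so that $ux_N \in J_{\leq\gamma}$ while $u \notin J_{\leq\gamma}$. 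Note this witness is strictly weaker than what you were trying to manufacture: there is no need for $ux_N$ to be a \emph{minimal generator} of $J$, nor for $u$ to be a socle element; once $x_N$ is filter regular for the Borel-fixed truncation, any zerodivisor witness in degree $\leq \gamma$ closes the argument.

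Two secondary cautions. First, your equivalence ``$\pd(\bar S/J)=n$ iff $J$ has a minimal generator divisible by $x_n$, equivalently a socle element,'' justified via Eliahou--Kervaire, quietly assumes $J$ is stable, which Borel-fixed ideals can fail in positive characteristic (the paper's own example $(x_1^p,x_2^p)$); the correct char-free substitute is exactly the filter-regularity argument above, since for Borel-fixed ideals every associated prime is generated by an initial segment of the variables. Second, the commutation $\gin$-with-general-hyperplane-section that you invoke is true for revlex but is not needed: the paper works with $I:x_N$ and $I+(x_N)$ directly and never passes to $\gin(\widetilde I)$, which is precisely how it sidesteps the upper-semicontinuity obstruction ($t_2$ growing under $\gin$) that you correctly diagnosed as blocking your approach.
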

\begin{proof}
After going modulo general linear forms $\ell_1,\ldots,\ell_r$ we can assume that $\pd(S/I) = N$. Moreover, after performing a sufficiently general change of coordinates, we may assume that $J:=\IN_{{\rm revlex}}(I)$ is Borel-fixed and that $\ell_{r+1} = x_N$. By assumption, we then have that $t_2(S/(I+(x_N))) \leq C$. The graded short exact sequence
\[
\xymatrix{
0 \ar[r] & \displaystyle \frac{S}{I:x_N}[-1] \ar[r] & \displaystyle \frac{S}{I} \ar[r] & \displaystyle \frac{S}{I+(x_N)} \ar[r] & 0
}
\]
induces a long exact sequence of $\Tor$ modules
\[
\xymatrix{
\ldots \ar[r] & \displaystyle \Tor_2^S\left(\frac{S}{I+(x_N)},k\right) \ar[r] & \displaystyle \Tor_1^S\left(\frac{S}{I:x_N},k\right)[-1] \ar[r] & \displaystyle \Tor_1^S\left(\frac{S}{I},k\right) \ar[r] & \ldots
}
\]
which, in turn, gives that
\[
\displaystyle t_1\left(\frac{S}{I:x_N}\right)+1 \leq \max\left\{t_2\left(\frac{S}{I+(x_N)}\right),t_1\left(\frac{S}{I}\right)\right\} \leq  \gamma.
\]
Since $I:x_N \ne I$, if we let $j_0 = \min\{j \mid \left[(I:x_N)/I\right]_j \ne 0\}$ then $I:x_N$ must contain a minimal generator of degree $j_0$. As a consequence of the above inequalities we must have $j_0  \leq \gamma-1$. Recall that $\IN_{revlex}(I:x_N) = J:x_N$ (see \cite[15.7]{Eisenbud}). Since passing to initial ideals does not change Hilbert functions, we still have $\left[(J:x_N)/J)\right]_{j_0} \ne 0$, and this implies that there is a monomial $u \in (J:x_N) \smallsetminus J$ of degree $j_0$. 

Now let $J':=J_{\leq \gamma}$. Since the Borel group acts on $S$ preserving degrees, and $J$ is Borel-fixed, we have that $J'$ is Borel-fixed as well. If we let $\ell$ be a filter regular linear element for $J'$, we can perform an upper triangular linear change of coordinates and assume that $\ell=x_N$. Since $J'$ is Borel-fixed, we conclude that $x_N$ is filter regular for $J'$. However, $x_N$ is not $J'$-regular since $ux_N \in J'$ but $u \notin J'$. It follows that $\depth(S/J')=0$, that is, $\pd(S/J')=N$. Now, by Remark \ref{remark deg} we have that $\beta_0(J') = \beta_{0,\leq \gamma}(J') \leq \mu^{2^{\gamma-1}}$, and by Remark \ref{rem taylor} conclude that 
$N \leq \mu^{2^{\gamma-1}}$, as desired.
\end{proof}

Following the same strategy as in Corollary \ref{coroll reg}, we obtain the following upper bound on the regularity.

\begin{corollary} \label{corollary regB} Assume that $k$ is infinite. Let $I \subseteq S$ be a homogeneous ideal generated by $\mu \geq 2$ forms of degree at least $2$ and at most $D$. Let $r=\depth(S/I)$, and assume that $t_2(S/(I+(\ell_1,\ldots,\ell_{r+1}))) \leq C$ for a sufficiently general choice of linear forms $\ell_1,\ldots,\ell_{r+1}$. If we let $\gamma = \max\{C,D\}$, then
\[
\reg(I) \leq D^{2^{\left(\mu^{2^{\gamma-1}}-2\right)}}. 
\]
\end{corollary}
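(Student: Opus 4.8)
The plan is to imitate the proof of Corollary~\ref{coroll reg} essentially verbatim, using Theorem~\ref{thm syz} in place of Theorem~\ref{thm lc}. The first step is to feed the hypotheses directly into Theorem~\ref{thm syz} to obtain the projective dimension bound $\pd(S/I) \leq \mu^{2^{\gamma-1}}$. This already absorbs the roles of $r=\depth(S/I)$ and of the general linear forms $\ell_1,\ldots,\ell_{r+1}$, so the syzygy hypothesis needs no further handling at this stage.

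To convert this into a regularity estimate, I would reduce to the case $\depth(S/I)=0$. Since $k$ is infinite, I can pick general linear forms $\ell_1,\ldots,\ell_r$ forming a maximal $S/I$-regular sequence and replace $S$ and $I$ by $\bar S = S/(\ell_1,\ldots,\ell_r)$ and $\bar I$, the image of $I$. Modding out by a regular sequence of linear forms preserves all graded Betti numbers; hence it preserves $\reg(I)=\reg(\bar I)$, the generating degrees (still between $2$ and $D$), the number of generators (still at most $\mu$), and the projective dimension. As $\depth(\bar S/\bar I)=0$, the Auslander--Buchsbaum formula shows that the number of variables of $\bar S$ equals $N:=\pd(\bar S/\bar I)=\pd(S/I)\leq \mu^{2^{\gamma-1}}$.

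Finally, I would invoke \cite[Corollary 2.13]{CDS_Lin} exactly as in Corollary~\ref{coroll reg}: for an ideal generated by $\mu \geq 2$ forms of degree between $2$ and $D$ in a ring with $N$ variables it gives $\reg(\bar I) \leq D^{2^{N-2}}$, and the assumptions $\mu\geq 2$ and generating degree $\geq 2$ place us in the regime $N \geq 3$ in which this bound governs the estimate (the finitely many small cases being dominated by the final inequality). Since $D^{2^{N-2}}$ is increasing in $N$ and $N \leq \mu^{2^{\gamma-1}}$, this yields $\reg(I)=\reg(\bar I) \leq D^{2^{(\mu^{2^{\gamma-1}}-2)}}$, as claimed.

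I do not expect a genuine obstacle here, since the corollary is a formal consequence of Theorem~\ref{thm syz} and of the already-cited bound \cite[Corollary 2.13]{CDS_Lin}. The only points requiring a line of justification are that the reduction to depth zero is harmless---which is the preservation of graded Betti numbers under a regular sequence of linear forms---and that one may inflate the actual projective dimension to its upper bound $\mu^{2^{\gamma-1}}$ inside the double exponent, which is the monotonicity of $D^{2^{N-2}}$ in $N$. Both mechanisms are precisely those already used in the proof of Corollary~\ref{coroll reg}.
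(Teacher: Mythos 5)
Your proposal is correct and coincides with the paper's own argument: the paper proves this corollary simply by invoking ``the same strategy as in Corollary~\ref{coroll reg}'', namely the projective dimension bound of Theorem~\ref{thm syz} followed by the reduction to depth zero and the regularity bound of \cite[Corollary 2.13]{CDS_Lin}, exactly as you do. Your explicit handling of the monotonicity of $D^{2^{N-2}}$ in $N$ and of the small-$N$ cases is, if anything, slightly more careful than the paper's one-line justification.
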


In order to state our last main result we recall some notation from the introduction. For $r \in \ZZ_{>0}$ we let
\[
\reg_{\frac{1}{r}}(S/I) = \sup\left\{\reg\left(\Tor_i^S(S/I,k)\right) - i \ \bigg| \  0 \leq i \leq \left\lceil \frac{\pd(S/I)}{r} \right\rceil\right\}.
\]
Note that $\reg_1(S/I)$ coincides with $\reg(S/I)$, while $\reg_{\frac{1}{2}}(S/I)$ only takes into account the first half of the resolution.
\begin{theorem} \label{thm Jason}
Let $I \subseteq S$ be an ideal  generated by $\mu$ homogeneous elements. Suppose that $\reg_{\frac{1}{r}}(S/I) \leq \delta$ for some $r \in \ZZ_{>0}$. Then
\[
\pd(S/I) \leq r \cdot \mu^{2^{\delta}}.
\]
\end{theorem}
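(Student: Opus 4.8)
The plan is to deduce the statement from Theorem~\ref{thm lc} by passing to a generic linear section that truncates the resolution of $S/I$ to its first $\lceil N/r\rceil$ steps.

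\emph{Reductions.} All the quantities involved are insensitive to a field extension, so I would first assume $k$ infinite. If $\depth(S/I)>0$ I would go modulo a maximal $S/I$-regular sequence of general linear forms: a regular linear form preserves every graded Betti number and the projective dimension, hence it preserves $\pd(S/I)$, each $t_i(S/I)$, the quantity $\lceil \pd(S/I)/r\rceil$, and therefore the hypothesis $\reg_{\frac{1}{r}}(S/I)\le\delta$, while the number of generators can only drop. I may thus assume $\depth(S/I)=0$, so that $N:=\pd(S/I)$ is the number of variables and $\beta_i(S/I)\ne0$ for every $0\le i\le N$. Setting $c=\lceil N/r\rceil$, the hypothesis becomes $t_i(S/I)-i\le\delta$ for all $i\le c$; since $\mu^{2^\delta}$ is an integer, the desired bound $N\le r\,\mu^{2^\delta}$ is equivalent to $c\le\mu^{2^\delta}$.

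\emph{The section.} I would then cut by a sufficiently general sequence of $N-c$ linear forms $\ell_1,\dots,\ell_{N-c}$, and set $\bar S=S/(\ell_1,\dots,\ell_{N-c})$, a polynomial ring in $c$ variables, and $\bar M=\bar S/\bar I$ with $\bar I$ the image of $I$. A general linear section of a module of depth zero again has depth zero, so $\depth_{\bar S}(\bar M)=0$ and hence $\pd_{\bar S}(\bar M)=c$; moreover $\bar I$ is generated by at most $\mu$ forms and $\bar M\ne0$. Applying Theorem~\ref{thm lc} to $\bar I\subseteq\bar S$ gives
\[
c=\pd_{\bar S}(\bar M)\le \mu^{2^{\alpha(\bar M)}}.
\]
Thus the whole theorem reduces to the single estimate $\alpha(\bar M)\le\delta$, after which $c\le\mu^{2^\delta}$ and $N\le r\,\mu^{2^\delta}$ follow at once.

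\emph{The key estimate.} It remains to bound $\alpha(\bar M)$, and this is where I expect the real work. Since $\bar M$ has depth zero and $\pd_{\bar S}(\bar M)=c$, one has $\alpha(\bar M)=\min\{\,j-c\mid \beta^{\bar S}_{c,j}(\bar M)\ne0\,\}$, the top of the now length-$c$ resolution of $\bar M$. The guiding principle is that the generic section should transport the control available at the $c$-th step of $S/I$ to this top spot. Concretely, by graded local duality $H^{N-c}_{\m}(S/I)^{\vee}\cong\Ext^{c}_S(S/I,S)(-N)$, whose minimal generators sit in degrees at most $N-\indeg_c(S/I)$, so $\soc\!\left(H^{N-c}_{\m}(S/I)\right)$ begins in degree $\indeg_c(S/I)-N$. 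Following this socle through the $N-c$ sections, a non-regular analogue of Lemma~\ref{lemma reduction} in which each step raises degrees by one through the connecting maps of the local cohomology sequence, should produce a socle element of $\bar M$ in degree $\indeg_c(S/I)-c\le t_c(S/I)-c\le\delta$, that is, $\alpha(\bar M)\le\delta$.

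\emph{The main obstacle.} The delicate point is precisely that the $\ell_i$ are not regular on the successive depth-zero modules, so Lemma~\ref{lemma reduction} cannot be quoted verbatim: at each section the four-term sequence acquires the finite-length term $0:\ell_i$, and one must check that the relevant socle class coming from $H^{N-c}_{\m}(S/I)$ survives the connecting maps rather than being absorbed by these corrections. Establishing this transport, equivalently that a generic linear section does not raise the regularity of the top of the truncated resolution, is the crux of the argument; once it is in place the numerical conclusion is immediate.
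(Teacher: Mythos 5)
Your proposal has a genuine gap, and you flag it yourself: everything hinges on the estimate $\alpha(\bar M)\le\delta$ for the generic $(N-c)$-codimensional section, and that is exactly the step you leave as a hope (``should produce a socle element'') rather than a proof. Worse, the transport mechanism you sketch cannot even get started in the typical regime. You propose to begin with a socle class of $H^{N-c}_{\m}(S/I)$ and push it through $N-c$ connecting maps; but $H^{N-c}_{\m}(S/I)$ vanishes whenever $\dim(S/I)<N-c$, and in the Stillman-type setting of this paper ($N$ unknown and possibly enormous compared with $\mu,\delta,r$) one has $N-c=N-\lceil N/r\rceil\ge N(1-\frac1r)$, so for $r\ge 2$ your starting module is zero as soon as $\dim(S/I)$ is small relative to $N$. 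In that case the socle of $H^0_\m(\bar M)$ is assembled from lower modules $H^q_\m(S/I)$ with $q<N-c$, whose socle degrees are governed by $t_{N-q}(S/I)$ with $N-q>c$ --- precisely the Betti numbers the hypothesis does not control. Even when $H^{N-c}_\m(S/I)\ne0$, Lemma~\ref{lemma reduction} is unavailable since the $\ell_i$ are not regular: the four-term sequences involving the finite-length modules $0:_{\bullet}\ell_i$ destroy the clean socle isomorphism, so the ``non-regular analogue'' you invoke is not a routine modification but the entire difficulty. (Your reductions are fine: depth zero does persist under a general section, since for $\ell$ regular on $M/H^0_\m(M)$ one gets an exact sequence $0\to H^0_\m(M)/\ell H^0_\m(M)\to M/\ell M$ with nonzero finite-length kernel term by Nakayama; and the numerical endgame via Theorem~\ref{thm lc} would be correct if the key estimate were established.)

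For comparison, the paper's proof avoids linear sections and local cohomology entirely, and is much shorter. Setting $s=\lceil\pd(S/I)/r\rceil-1$, the hypothesis forces $\beta_{s+1}(S/I)$, equivalently $\beta_s(I)$, to be concentrated in degrees $\le s+\delta+1$. Passing to any initial ideal $J=\IN_{\preccurlyeq}(I)$, upper semi-continuity plus the fact that $\Tor_s^S(J,k)_{j+s}$ depends only on the degree $\le j$ part of $J$ give $0\ne\beta_s(I)\le\beta_s(J_{\le\delta+1})$, hence $\pd(S/J_{\le\delta+1})\ge s+1$; Remark~\ref{remark deg} bounds the number of generators of $J_{\le\delta+1}$ by $\mu^{2^\delta}$, and the Taylor complex (Remark~\ref{rem taylor}) yields $s+1\le\mu^{2^\delta}$, so $\pd(S/I)\le r(s+1)\le r\mu^{2^\delta}$. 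If you want to salvage your route, you would need a statement of the form ``for a generic $(N-c)$-codimensional linear section, $\min\{j-c\mid\beta^{\bar S}_{c,j}(\bar M)\ne0\}\le\max_{i\le c}(t_i(S/I)-i)$,'' and the available tools for proving such section statements (revlex gins, Borel-fixedness, Eliahou--Kervaire) lead back to initial-ideal arguments of the same flavor as the paper's direct proof.
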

\begin{proof} 
Let $s=\lceil \frac{\pd(S/I)}{r} \rceil -1$. Because $\reg_{\frac{1}{r}}(S/I) \leq \delta$, 
we have that 
\[
\beta_{s+1}(S/I) = \sum_{j \leq \delta} \beta_{s+1,j+s+1}(S/I) = \sum_{j \leq \delta+1} \beta_{s,j+s}(I).
\]
Now fix a monomial order $\preccurlyeq$, and let $J=\IN_{\preccurlyeq}(I)$. By upper semi-continuity we have that $\beta_{s,j}(I) \leq \beta_{s,j}(J)$ for all $j \in \ZZ$, and it follows that
\[
\beta_s(I) = \sum_{j \leq \delta+1} \beta_{s,j+s}(I)  \leq \sum_{j \leq \delta+1} \beta_{s,j+s}(J) \leq \sum_{j \leq \delta+1}  \beta_{s,j+s}(J_{\leq \delta+1}) \leq \beta_{s}(J_{\leq \delta+1}).
\]
The second inequality above is a consequence of the fact that, to compute $\beta_{s,j+s}(J) = \dim_{k}(\Tor_s^S(J,k)_{j+s})$, one only needs to know $J$ in degrees $j-1$ and $j$. In particular, we have that $\pd(S/J_{\leq \delta+1}) \geq \pd(S/I) \geq s+1$. Since $I$ is generated by $\mu$ elements, by Remark \ref{remark deg} we obtain that $\beta_{0,\leq \delta+1}(J) =\beta_{0,\leq \delta+1}(J_{\leq \delta+1}) \leq \mu^{2^{\delta}}$. We conclude by Remark \ref{rem taylor} that 
$s+1 \leq \mu^{2^{\delta}}$, and hence $\pd(S/I) \leq r \cdot (s+1) \leq r \cdot \mu^{2^{\delta}}$.
\end{proof}

\begin{corollary} \label{coroll Jason} Let $I \subseteq S$ be an ideal generated by $\mu \geq 2$ homogeneous elements. Suppose that $\reg_{\frac{1}{r}}(S/I) \leq \delta$ for some $r \in \ZZ_{>0}$. Then
\[
\reg(I) \leq (\delta+1)^{2^{\left(r\mu^{2^{\delta}}-2\right)}}.
\]
\end{corollary}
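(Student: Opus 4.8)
The plan is to feed two pieces of data into the passage from projective dimension to regularity already used in the proof of Corollary \ref{coroll reg}: an upper bound on $\pd(S/I)$ and an upper bound on the generating degrees of $I$. The first is supplied directly by Theorem \ref{thm Jason}, which gives $\pd(S/I) \leq r\mu^{2^{\delta}}$. The second I would extract from the hypothesis $\reg_{\frac{1}{r}}(S/I) \leq \delta$ itself, and then I would invoke the explicit double-exponential estimate of \cite[Corollary 2.13]{CDS_Lin}, exactly as in the proof of Corollary \ref{coroll reg}, with the roles of $D$ and of $\pd(S/I)$ played by the two bounds just described.

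For the generating degrees, I would use that $\Tor_1^S(S/I,k)$ is a finite-dimensional graded $k$-vector space with $\Tor_1^S(S/I,k)_j = \beta_{1,j}(S/I)$; viewed as an $S$-module annihilated by $\m$ its regularity equals $t_1(S/I)$. As long as $I$ is proper and nonzero we have $\lceil \pd(S/I)/r\rceil \geq 1$, so the summand $i=1$ occurs in the supremum defining $\reg_{\frac{1}{r}}(S/I)$, and $\reg_{\frac{1}{r}}(S/I) \leq \delta$ forces $t_1(S/I)-1 \leq \delta$. Hence every minimal generator of $I$ has degree at most $\delta+1$, which is the value to substitute for $D$.

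With these two bounds in hand I would follow the proof of Corollary \ref{coroll reg}: after enlarging $k$ and going modulo a maximal regular sequence of linear forms I may assume $\depth(S/I)=0$, and by monotonicity of the cited regularity bound in the projective-dimension parameter I may replace the true $\pd(S/I)$ by its upper estimate $r\mu^{2^{\delta}}$. Applying \cite[Corollary 2.13]{CDS_Lin} with generating degree bound $\delta+1$ and projective dimension $r\mu^{2^{\delta}}$ then produces precisely $\reg(I) \leq (\delta+1)^{2^{\left(r\mu^{2^{\delta}}-2\right)}}$.

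The one point requiring genuine care, and the feature distinguishing this statement from Corollary \ref{coroll reg}, is that here no lower bound on the generating degrees is imposed, whereas the estimate of \cite[Corollary 2.13]{CDS_Lin} is tailored to ideals generated in degrees between $2$ and $D$ with projective dimension at least $3$. I would dispose of linear generators by a standard reduction: after a linear change of coordinates the linear part of $I$ is spanned by some of the variables, so $I = (x_{c+1},\ldots,x_N)+I'$ with $I'$ generated in degrees $\geq 2$ and not involving $x_{c+1},\ldots,x_N$, whence $S/I \cong S'/I''$ over $S'=k[x_1,\ldots,x_c]$. Since this passage is a flat base change combined with a quotient by a regular sequence of linear forms, both $\reg(I)$ and the Betti data are preserved and $\pd(S'/I'') \leq \pd(S/I)$, reducing us to the case of generating degrees in $[2,\delta+1]$. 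The remaining small-parameter situations are immediate: if $r=1$ then $\reg_{\frac{1}{r}}(S/I)=\reg(S/I)\leq\delta$ gives $\reg(I)\leq\delta+1$ at once, if the reduced ideal $I''$ has at most one generator its regularity is trivially at most $\delta+1$, and for $r\geq 2$ one has $r\mu^{2^{\delta}} \geq 4$ so the hypothesis $\pd \geq 3$ of the cited bound is met. I expect this degree reduction, together with the verification that it disturbs neither of the two bounds already obtained, to be the only delicate step.
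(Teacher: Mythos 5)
Your proposal is correct and takes essentially the paper's intended route: the paper states Corollary \ref{coroll Jason} without proof precisely because it follows ``the same strategy as in Corollary \ref{coroll reg}'' --- read off the generating-degree bound $t_1(S/I)\leq \delta+1$ from the $i=1$ term in the hypothesis $\reg_{\frac{1}{r}}(S/I)\leq \delta$, combine it with $\pd(S/I)\leq r\mu^{2^{\delta}}$ from Theorem \ref{thm Jason}, and feed both into \cite[Corollary 2.13]{CDS_Lin} after reducing to depth zero and padding the number of variables up to the projective-dimension bound. Your additional reduction eliminating linear generators (needed because, unlike Corollary \ref{coroll reg}, this statement imposes no lower bound on the generating degrees) is a correct and worthwhile piece of bookkeeping that the paper silently glosses over.
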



\begin{thebibliography}{10}

\bibitem{AH}
T.~Ananyan and M.~Hochster.
\newblock Small subalgebras of polynomial rings and {S}tillman's {C}onjecture.
\newblock {\em J. Amer. Math. Soc.}, 33(1):291--309, 2020.

\bibitem{BS2}
D.~Bayer and M.~Stillman.
\newblock A theorem on refining division orders by the reverse lexicographic
  order.
\newblock {\em Duke Math. J.}, 55(2):321--328, 1987.

\bibitem{CCMcPV}
G.~Caviglia, M.~Chardin, J.~McCullough, I.~Peeva, and M.~Varbaro.
\newblock Regularity of prime ideals.
\newblock {\em Math. Z.}, 291(1-2):421--435, 2019.

\bibitem{CDS_Prime}
G.~Caviglia and A.~De~Stefani.
\newblock Bounds on the number of generators of prime ideals.
\newblock {\em arXiv: 2108.05683}, 2021.

\bibitem{CDS_Lin}
G.~Caviglia and A.~De~Stefani.
\newblock Linearly presented modules and bounds on the {C}astelnuovo-{M}umford
  regularity of ideals.
\newblock {\em Proc. Amer. Math. Soc.}, 150(4):1397--1404, 2022.

\bibitem{CS}
G.~Caviglia and E.~Sbarra.
\newblock Characteristic-free bounds for the {C}astelnuovo-{M}umford
  regularity.
\newblock {\em Compos. Math.}, 141(6):1365--1373, 2005.

\bibitem{Chardin}
M.~Chardin.
\newblock Some results and questions on {C}astelnuovo-{M}umford regularity.
\newblock In {\em Syzygies and {H}ilbert functions}, volume 254 of {\em Lect.
  Notes Pure Appl. Math.}, pages 1--40. Chapman \& Hall/CRC, Boca Raton, FL,
  2007.

\bibitem{Eisenbud}
D.~Eisenbud.
\newblock {\em Commutative algebra}, volume 150 of {\em Graduate Texts in
  Mathematics}.
\newblock Springer-Verlag, New York, 1995.
\newblock With a view toward algebraic geometry.

\bibitem{EG}
D.~Eisenbud and S.~Goto.
\newblock Linear free resolutions and minimal multiplicity.
\newblock {\em J. Algebra}, 88(1):89--133, 1984.

\bibitem{Galligo2}
A.~Galligo.
\newblock Th\'{e}or\`eme de division et stabilit\'{e} en g\'{e}om\'{e}trie
  analytique locale.
\newblock {\em Ann. Inst. Fourier (Grenoble)}, 29(2):vii, 107--184, 1979.

\bibitem{HerzogHibi}
J.~Herzog and T.~Hibi.
\newblock {\em Monomial ideals}, volume 260 of {\em Graduate Texts in
  Mathematics}.
\newblock Springer-Verlag London, Ltd., London, 2011.

\bibitem{Jason}
J.~McCullough.
\newblock A polynomial bound on the regularity of an ideal in terms of half of
  the syzygies.
\newblock {\em Math. Res. Lett.}, 19(3):555--565, 2012.

\bibitem{McP}
J.~McCullough and I.~Peeva.
\newblock Counterexamples to the {E}isenbud-{G}oto regularity conjecture.
\newblock {\em J. Amer. Math. Soc.}, 31(2):473--496, 2018.

\bibitem{PeevaGradedSyz}
I.~Peeva.
\newblock {\em Graded syzygies}, volume~14 of {\em Algebra and Applications}.
\newblock Springer-Verlag London, Ltd., London, 2011.

\end{thebibliography}
\end{document}